\numberwithin{equation}{section}
\newtheorem{Theorem}{Theorem}[section]
\newtheorem*{Theorem*}{Theorem}
\newtheorem{Lemma}[Theorem]{Lemma}
\theoremstyle{definition}
	\newtheorem{Remark}[Theorem]{Remark} }
\newcommand{\CC}{\ensuremath{\mathbb{C}}}
\newcommand{\RR}{\ensuremath{\mathbb{R}}}
\newcommand{\ZZ}{\ensuremath{\mathbb{Z}}}
\newcommand{\DD}{\ensuremath{\mathbb{D}}}
\newcommand{\me}{\mathrm{e}}
\newcommand{\mo}{\mathcal{O}}
\begin{document}
\allowdisplaybreaks

\renewcommand{\thefootnote}{}

\newcommand{\arXivNumber}{2403.14942}

\renewcommand{\PaperNumber}{074}
	
\FirstPageHeading

\ShortArticleName{Asymptotics of the Humbert Function $\Psi_1$ for Two Large Arguments}
	
\ArticleName{Asymptotics of the Humbert Function $\boldsymbol{\Psi_1}$\\ for Two Large Arguments\footnote{This paper is a~contribution to the Special Issue on Asymptotics and Applications of Special Functions in Memory of Richard Paris. The~full collection is available at \href{https://www.emis.de/journals/SIGMA/Paris.html}{https://www.emis.de/journals/SIGMA/Paris.html}}}
	
\Author{Peng-Cheng HANG and Min-Jie LUO}

\AuthorNameForHeading{P.-C.~Hang and M.-J.~Luo}

\Address{Department of Mathematics, School of Mathematics and Statistics,\\Donghua University, Shanghai 201620, P.R.~China}

\Email{\href{mailto:mathroc618@outlook.com}{mathroc618@outlook.com}, \href{mailto:mathwinnie@live.com}{mathwinnie@live.com}, \href{mailto:mathwinnie@dhu.edu.cn}{mathwinnie@dhu.edu.cn}}

\ArticleDates{Received March 27, 2024, in final form August 02, 2024; Published online August 09, 2024}

\Abstract{Recently, Wald and Henkel (2018) derived the leading-order estimate of the Humbert functions $\Phi_2$, $\Phi_3$ and $\Xi_2$ for two large arguments, but their technique cannot handle the Humbert function $\Psi_1$. In this paper, we establish the leading asymptotic behavior of the Humbert function $\Psi_1$ for two large arguments. Our proof is based on a connection formula of the Gauss hypergeometric function and Nagel's approach (2004). This approach is also applied to deduce asymptotic expansions of the generalized hypergeometric function $_pF_q$ $(p\leqslant q)$ for large parameters, which are not contained in NIST handbook.}
		
\Keywords{Humbert function; asymptotics; generalized hypergeometric function}
		
\Classification{33C20; 33C65; 33C70; 41A60}

\renewcommand{\thefootnote}{\arabic{footnote}}
\setcounter{footnote}{0}

\section{Introduction}
Humbert \cite{Humbert-1922} introduced seven confluent hypergeometric functions of two variables which are denoted by $\Phi_1$, $\Phi_2$, $\Phi_3$, $\Psi_1$, $\Psi_2$, $\Xi_1$, $\Xi_2$. In this paper, we mainly focus on the Humbert function~$\Psi_1$, which is defined by
\[\Psi_1[a,b;c,c';x,y]=\sum_{m,n=0}^{\infty}\frac{(a)_{m+n}(b)_m}{(c)_m(c')_n}\frac{x^m}{m!}\frac{y^n}{n!},\qquad |x|<1,\quad |y|<\infty,\]
where $a,b\in\CC$ and $c,c'\notin\ZZ_{\leqslant 0}$. This function has a Kummer-type transformation \cite[equation~(2.54)]{Choi_Hasanov-2011}
\begin{equation}\label{Psi_1 Kummer transformation}
	\Psi_1[a,b;c,c';x,y]=(1-x)^{-a}\Psi_1\biggl[a,c-b;c,c';\frac{x}{x-1},\frac{y}{1-x}\biggr].
\end{equation}

Using the series manipulation technique, we can obtain \cite[equation~(83)]{Brychkov_Saad-2014}
\begin{equation}\label{Psi_1 series representation}
	\Psi_1[a,b;c,c';x,y]=\sum_{n=0}^{\infty}\frac{(a)_n}{(c')_n}{}_2F_1\biggl[\begin{matrix}
		a+n,b \\
		c
	\end{matrix};x\biggr]\frac{y^n}{n!},
\end{equation}
where $_2F_1$ denotes the Gauss hypergeometric function defined below in \eqref{pFq definition}. Similar analysis of \cite[equation~(14)]{Luo_Raina-2021} gives
\[\biggl|{}_2F_1\bigg[\begin{matrix}
	a+n,b \\
	c
\end{matrix};x\bigg]\biggr|=\mo\big(n^{-\omega}\rho_x^n\big),\qquad n\to\infty,\quad n\in\ZZ_{>0},\]
where
\[\omega=\min\{\operatorname{Re}(b),\operatorname{Re}(c-b)\},\qquad \rho_x=\max\bigl\{1,|1-x|^{-1}\bigr\}.\]
Then the summand in \eqref{Psi_1 series representation} has the order of magnitude
\[\mo\bigg(n^{\operatorname{Re}(a-c')-\omega}\frac{(\rho_x|y|)^n}{n!}\bigg),\qquad n\to\infty,\]
which implies that the series \eqref{Psi_1 series representation} converges absolutely in the region
\[\DD_{\Psi_1}:=\bigl\{(x,y)\in\CC^2\colon x\ne 1,\, |{\arg}(1-x)|<\pi,\, |y|<\infty\bigr\}.\]
So the series in \eqref{Psi_1 series representation} provides an analytic continuation of $\Psi_1$ to $\DD_{\Psi_1}$.

There are some useful identities about $\Psi_1$ in the literature (see \cite{Brychkov_Saad-2014,Choi_Hasanov-2011,ElHalba-2014,Humbert-1922}), as well as many applications in physics (see \cite[equation~(5.2)]{Ahbli_Mouayn-2018} and \cite{Belafhal_Saad-2017,Borghi-2023}). But we still know very little about the asymptotics of $\Psi_1$. Recently, in order to study the asymptotics of Saran's hypergeometric function $F_K$ when two of its variables become simultaneously large, Hang and Luo \cite{Hang_Luo-2024} established asymptotic expansions of $\Psi_1$ for one large variable. By using a Tauberian theorem for Laplace transform, Wald and Henkel \cite{Wald_Henkel-2018} derived the leading-order estimate of the Humbert functions $\Phi_2$, $\Phi_3$ and $\Xi_2$ when the absolute values of the two independent variables become simultaneously large. They also considered $\Psi_1$ and pointed out that their technique fails for $\Psi_1$ (see \cite[p.~99]{Wald_Henkel-2018}). In this paper, we give an incomplete answer to their problem by establishing the leading asymptotic behavior of $\Psi_1$ when $|x|\to \infty$ and $y\to +\infty$.

This paper is organised as follows. In Section~\ref{Sect. 2}, we demonstrate three lemmas which will be used later. Section~\ref{Sect. 3} devotes to the asymptotics of $\Psi_1$ for two large arguments. In Section~\ref{Sect. 4}, we present asymptotic expansions of the generalized hypergeometric function $_pF_q$ $(p\leqslant q)$ for large parameters, which are not contained in the NIST handbook \cite{NIST-Handbook}. The proofs in Sections \ref{Sect. 3} and~\ref{Sect. 4} are based on Nagel's approach \cite{Nagel-2004}. The main results are Theorems~\ref{Thm: Psi_1 two large arguments}, \ref{Thm: 2F2 large parameter--lambda}, \ref{Thm: 2F2 large parameter--n}, \ref{Thm: pFq large parameters} and~\ref{Thm: 2F2 other results}.

\textbf{Notation.}
In this paper, the number $C$ generically denotes a positive constant independent of the parameter $n$, the index of summation $\ell$ and the variable $z$. Moreover, the generalized hypergeometric function $_pF_q$ is defined by (see, for example, \cite[p.~404]{NIST-Handbook})
	\begin{equation}\label{pFq definition}
		{}_pF_q\bigg[\begin{matrix}
			a_1,\dots,a_p\\
			b_1,\dots,b_q
		\end{matrix};z\bigg]
		\equiv
		{}_pF_q[
		a_1,\dots,a_p;
		b_1,\dots,b_q;z]
		:=\sum_{n=0}^{\infty}\frac{(a_1)_n\cdots(a_p)_n}{(b_1)_n\cdots(b_q)_n}\frac{z^n}{n!},
	\end{equation}
	where $a_1,\dots,a_p\in\CC$ and $b_1,\dots,b_q\in\CC\setminus\ZZ_{\leqslant 0}$.

\section{Preliminary lemmas}\label{Sect. 2}
In this section, we deduce three lemmas which will be used in the sequel. The first is a sharp bound for the ratio of Pochhammer symbols.
\begin{Lemma}\label{Lemma: ratio of Pochhammer symbol}
	If $a\in\CC$ and $b\in\CC\setminus\ZZ_{\leqslant 0}$, then
	\[\biggl|\frac{(a)_n}{(b)_n}\biggr|\leqslant Cn^{\operatorname{Re}(a-b)},\qquad n\in\ZZ_{>0}.\]
\end{Lemma}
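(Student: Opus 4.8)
The plan is to reduce the ratio of Pochhammer symbols to a ratio of Gamma functions and then invoke the standard asymptotics of $\Gamma(z+n)/\Gamma(n)$ (Stirling's formula). Recall that for $n\in\ZZ_{>0}$ and any $a\in\CC$ one has $(a)_n=\Gamma(a+n)/\Gamma(a)$ whenever $a\notin\ZZ_{\leqslant 0}$, so that
\[\frac{(a)_n}{(b)_n}=\frac{\Gamma(b)}{\Gamma(a)}\cdot\frac{\Gamma(a+n)}{\Gamma(b+n)}.\]
When $a\in\ZZ_{\leqslant 0}$ the factor $(a)_n$ eventually vanishes, so the bound is trivial for $n$ large; only finitely many $n$ remain and the constant $C$ can be enlarged to absorb them. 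Hence it suffices to treat $a\notin\ZZ_{\leqslant 0}$, and since $\Gamma(b)/\Gamma(a)$ is a fixed constant, everything comes down to estimating $\bigl|\Gamma(a+n)/\Gamma(b+n)\bigr|$.

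First I would recall the well-known asymptotic relation (a consequence of Stirling's formula, see e.g.\ \cite[\S5.11]{NIST-Handbook})
\[\frac{\Gamma(z+\alpha)}{\Gamma(z+\beta)}\sim z^{\alpha-\beta},\qquad z\to\infty,\quad |\arg z|<\pi,\]
uniformly for bounded $\alpha,\beta$. Applying this with $z=n\to+\infty$, $\alpha=a$, $\beta=b$ gives
\[\left|\frac{\Gamma(a+n)}{\Gamma(b+n)}\right|\sim\bigl|n^{a-b}\bigr|=n^{\operatorname{Re}(a-b)},\qquad n\to\infty,\]
since $\bigl|n^{a-b}\bigr|=n^{\operatorname{Re}(a-b)}$ for real positive $n$. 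Therefore the quotient $\bigl|\Gamma(a+n)/\Gamma(b+n)\bigr|/n^{\operatorname{Re}(a-b)}$ tends to a finite positive limit; in particular it is bounded on $n\in\ZZ_{>0}$ by some constant, and multiplying back the fixed factor $|\Gamma(b)/\Gamma(a)|$ yields the claimed inequality $\bigl|(a)_n/(b)_n\bigr|\leqslant Cn^{\operatorname{Re}(a-b)}$ for all $n\in\ZZ_{>0}$.

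The only subtlety — and the part that needs a word of care rather than real difficulty — is making the implied constant genuinely uniform over all $n\geqslant 1$ rather than just asymptotically. This is handled by noting that the sequence $\bigl|(a)_n/(b)_n\bigr|\big/n^{\operatorname{Re}(a-b)}$ is continuous in $n$ on any finite range and has a finite limit as $n\to\infty$, hence is bounded; alternatively one can give a fully elementary argument by writing the ratio as the finite product $\prod_{k=0}^{n-1}\frac{a+k}{b+k}$ and estimating $\log\bigl|\frac{a+k}{b+k}\bigr|=\operatorname{Re}(a-b)\log k+O(k^{-1})$ termwise, so that $\sum_{k=1}^{n-1}\log\bigl|\frac{a+k}{b+k}\bigr|=\operatorname{Re}(a-b)\log(n-1)!\,+O(1)$ and then invoking $\log(n-1)!=\operatorname{Re}(a-b)^{-1}\!\cdot$(a term comparable to $n\log n$) — but the cleanest route is simply the Gamma-ratio asymptotic above. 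I do not anticipate any real obstacle; the statement is essentially a repackaging of Stirling's formula, and the bound is ``sharp'' precisely because the asymptotic relation is an equivalence, not merely an upper estimate.
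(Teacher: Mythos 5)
Your argument is correct and is essentially the paper's own proof: reduce to the Gamma-function ratio and use Stirling's formula to see that $n^{b-a}(a)_n/(b)_n$ has a finite limit, hence is bounded uniformly in $n\in\ZZ_{>0}$, with the case $a\in\ZZ_{\leqslant 0}$ dismissed as trivial. (Only your parenthetical ``elementary'' alternative is off — the termwise estimate should read $\log\bigl|\tfrac{a+k}{b+k}\bigr|=\operatorname{Re}(a-b)/k+\mo\bigl(k^{-2}\bigr)$, not $\operatorname{Re}(a-b)\log k+\mo\bigl(k^{-1}\bigr)$ — but since you discard that route in favour of the Gamma-ratio asymptotic, the proof stands.)
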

\begin{proof}
	The proof for $a\in\ZZ_{\leqslant 0}$ is trivial. If $a\in\CC\setminus\ZZ_{\leqslant 0}$, Stirling's formula implies that
		\[\lim_{n\to\infty}n^{b-a}\frac{\Gamma(a+n)}{\Gamma(b+n)}=1,\]
		which concludes that $n^{b-a}\frac{(a)_n}{(b)_n}$ is bounded uniformly for $n\in\ZZ_{>0}$.
\end{proof}

The second is a simple estimate of the function $\Phi_a(x)$, which is the ``horizontal'' generating function of Stirling numbers of real order (see \cite[Section~8]{Butzer_Kilbas_Trujillo-2003}). More properties of $\Phi_a(x)$, containing the asymptotic behavior for fixed $x\in\RR$ and $a\to\pm\infty$, are studied in \cite[Section~3.2]{VanGorder-2009}.
\begin{Lemma}\label{Phi_a(x) estimate}
	Define
	\[\Phi_a(x):=\sum_{k=1}^{\infty}\frac{k^a}{k!}x^k,\qquad a,x\in\RR.\]
	Then $\Phi_a(x)\sim x^a\me^x$, $x\to +\infty$. Thus
	\[\Phi_a(x)\leqslant Kx^a\me^x,\qquad x\geqslant 1,\]
	where $K>0$ is a constant independent of $x$.
\end{Lemma}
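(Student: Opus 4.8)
The plan is to establish the asymptotic relation $\Phi_a(x)\sim x^a\me^x$ as $x\to+\infty$ and then deduce the bound as an immediate consequence. First I would rewrite the defining series in a form that makes the dominant contribution visible. The idea is that for large $x$, the terms $k^a x^k/k!$ are sharply peaked near $k\approx x$, where $k^a\approx x^a$; factoring out $x^a$ should leave something close to $\me^x$. Concretely, I would compare $\Phi_a(x)$ with $x^a\me^x=x^a\sum_{k\geqslant 0}x^k/k!$ by writing
\[
\Phi_a(x)=\sum_{k=1}^{\infty}\frac{k^a}{k!}x^k
=x^a\me^x\cdot\frac{\sum_{k=1}^{\infty}(k/x)^a\,x^k/k!}{\sum_{k=0}^{\infty}x^k/k!},
\]
so that it suffices to show the last ratio tends to $1$. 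That ratio is exactly $\mathbb{E}\big[(N/x)^a\big]$ where $N$ is a Poisson random variable with mean $x$ (the $k=0$ term contributes nothing since we may include or exclude it in the limit), and by the law of large numbers $N/x\to 1$ in probability as $x\to\infty$.

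The remaining work is to upgrade convergence in probability of $(N/x)^a$ to convergence of its expectation; this is the one genuinely technical point. For $a\geqslant 0$ the function $t\mapsto t^a$ is dominated on $[0,\infty)$ by, say, $1+t^{\lceil a\rceil}$, and the moments $\mathbb{E}\big[(N/x)^m\big]$ for integer $m$ are explicit polynomials in $1/x$ tending to $1$, which gives uniform integrability and hence convergence of the expectation. For $a<0$ the subtlety is the singularity of $t^a$ at $t=0$: the small values $k=1,2,\dots$ could in principle contribute. But here one simply observes that $\sum_{k=1}^{K}k^a x^k/k!\leqslant (\text{const})\,x^K$ for any fixed cutoff $K$, which is $o(x^a\me^x)$, so the tail $k>K$ dominates, and on that tail $(k/x)^a\leqslant K^a x^{-a}\cdot(\dots)$—more cleanly, for $a<0$ one has $(k/x)^a\leqslant 1$ once $k\geqslant x$ and is handled by the cutoff argument once $k<x$. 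Either way the contribution of indices far from the peak is negligible, and near the peak $(k/x)^a\to1$ uniformly. I would organize this as: split the sum at $k\leqslant\varepsilon x$, $\varepsilon x<k<x/\varepsilon$, and $k\geqslant x/\varepsilon$; bound the outer two ranges using Poisson tail estimates (Chernoff) against $\me^x$, and on the middle range use $(1-\varepsilon)^{|a|}\leqslant (k/x)^a\leqslant(1+\varepsilon)^{|a|}$ or its reciprocal.

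Alternatively—and this may be the cleanest route for a short lemma—I would avoid probability entirely and argue by a direct Laplace-type estimate on the series, or even more simply by the ratio-test/comparison heuristic made rigorous: since $k^a=x^a(k/x)^a$ and $(k/x)^a=\exp(a\log(k/x))$, Taylor expansion of the logarithm near $k=x$ combined with the standard saddle-point width $\sqrt{x}$ of the Poisson weights gives the result; but this is essentially the probabilistic argument in disguise. Once $\Phi_a(x)\sim x^a\me^x$ is established, the stated bound follows because a function asymptotic to $x^a\me^x$ and continuous (indeed real-analytic) on $[1,\infty)$ is bounded by a constant multiple of $x^a\me^x$ there: the ratio $\Phi_a(x)/(x^a\me^x)$ is continuous on $[1,\infty)$ and has a finite limit at $+\infty$, hence is bounded, so one may take $K=\sup_{x\geqslant1}\Phi_a(x)/(x^a\me^x)<\infty$. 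The main obstacle, as noted, is controlling the expectation (equivalently the near-origin terms of the series) when $a<0$; everything else is routine.
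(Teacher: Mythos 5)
Your argument is correct, but it is a genuinely different route from the paper's. The paper proves this lemma in one line: it takes $a_n=\frac{n^a}{n!}$ and $b_n=\frac{1}{n!}$ in a ready-made comparison theorem for entire power series from P\'olya--Szeg\H{o} (Vol.~II, p.~12, Problem~72), so the asymptotic $\Phi_a(x)\sim x^a\me^x$ is outsourced entirely to a classical result, and the uniform bound for $x\geqslant 1$ is left as the implicit ``Thus''. You instead prove the asymptotic from scratch by interpreting $\Phi_a(x)\me^{-x}$ as $\mathbb{E}\bigl[N^a\mathbf{1}_{N\geqslant 1}\bigr]$ for a Poisson variable $N$ of mean $x$, and showing $\mathbb{E}\bigl[(N/x)^a\bigr]\to 1$ via concentration around $k\approx x$ (Chernoff tails plus uniform integrability for $a\geqslant 0$, and a cutoff near $k=0$ for $a<0$, where the finitely many small-$k$ terms contribute only $\mo\bigl(x^K\bigr)=o\bigl(x^a\me^x\bigr)$). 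This is sound: the only blemish is a small bookkeeping inconsistency in the splitting (you announce the middle range $\varepsilon x<k<x/\varepsilon$ but then use bounds $(1\pm\varepsilon)^{\pm|a|}$ appropriate to a split at $(1\pm\varepsilon)x$; either split works once stated consistently, since Poisson deviations proportional to $x$ are exponentially small). Your closing step --- the ratio $\Phi_a(x)/\bigl(x^a\me^x\bigr)$ is continuous on $[1,\infty)$ with a finite limit, hence bounded, giving $K$ --- is exactly the deduction the paper leaves unstated. What the paper's citation buys is brevity; what your proof buys is self-containedness, an explicit mechanism for the peak at $k\approx x$, and a transparent treatment of the delicate case $a<0$ that the cited problem hides.
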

\begin{proof}
	Take $a_n=\frac{n^a}{n!}$ and $b_n=\frac{1}{n!}$ in \cite[p.~12, Problem~72]{Polya_Szego-vol.2}.
\end{proof}

The third is a global estimate for the confluent hypergeometric function $_1F_1$.

\begin{Lemma}\label{lem-1F1}
	Let $a,c\in\CC$. Choose $N\in\ZZ_{>0}$ such that $\operatorname{Re}(c+N+1)>0$ and define
	\begin{equation}\label{G_l definition}
			G_{\ell}(z):={}_1F_1\bigg[\begin{matrix}
				a+\ell\\
				c+\ell
			\end{matrix};z\bigg],\qquad \ell\in\ZZ_{\geqslant 0}.
	\end{equation}
	Then for $\ell\geqslant N+1$,
 \begin{equation}\label{G_l inequality}
			|G_\ell(z)|\leqslant C\me^{\gamma|z|},
	\end{equation}
	where $\gamma>0$ is a constant independent of $\ell$, $N$ and $z$.
\end{Lemma}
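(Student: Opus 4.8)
The plan is to control $G_\ell(z)$ by splitting the power series for $_1F_1$ into an initial block of $N+1$ terms and a tail, and to estimate the tail by comparison with $\me^{\gamma|z|}$. Write
\[
G_\ell(z)=\sum_{k=0}^{\infty}\frac{(a+\ell)_k}{(c+\ell)_k}\frac{z^k}{k!}
=\underbrace{\sum_{k=0}^{N}\frac{(a+\ell)_k}{(c+\ell)_k}\frac{z^k}{k!}}_{=:A_\ell(z)}
+\underbrace{\sum_{k=N+1}^{\infty}\frac{(a+\ell)_k}{(c+\ell)_k}\frac{z^k}{k!}}_{=:B_\ell(z)}.
\]
For the head $A_\ell(z)$, each ratio $(a+\ell)_k/(c+\ell)_k$ with $0\le k\le N$ is a fixed rational function of $\ell$; since $\operatorname{Re}(c+N+1)>0$ guarantees no vanishing denominators for $\ell\ge N+1$, these ratios are bounded uniformly in $\ell$, so $|A_\ell(z)|\le C\sum_{k=0}^{N}|z|^k/k!\le C\me^{|z|}$, which is already of the required form with $\gamma=1$.

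The substance is the tail $B_\ell(z)$. First I would reindex by $k=N+1+j$ and factor out the common Pochhammer blocks, using
\[
\frac{(a+\ell)_{N+1+j}}{(c+\ell)_{N+1+j}}
=\frac{(a+\ell)_{N+1}}{(c+\ell)_{N+1}}\cdot\frac{(a+\ell+N+1)_{j}}{(c+\ell+N+1)_{j}}.
\]
The prefactor $(a+\ell)_{N+1}/(c+\ell)_{N+1}$ is again a bounded rational function of $\ell$ for $\ell\ge N+1$ (this is where Lemma \ref{Lemma: ratio of Pochhammer symbol} can also be invoked, since it gives $|(a+\ell)_{N+1}/(c+\ell)_{N+1}|\le C\ell^{\operatorname{Re}(a-c)}$, hence bounded because the exponent is negative or the quantity is anyway $\mo(1)$ as $\ell\to\infty$ together with finiteness on the finite set $N+1\le\ell\le\ell_0$). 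For the remaining sum, the key observation is that with $\alpha:=a+\ell+N+1$ and $\beta:=c+\ell+N+1$ both having positive real part (again by $\operatorname{Re}(c+N+1)>0$ and, after enlarging $N$ if necessary, $\operatorname{Re}(a+N+1)>0$), the ratio $(\alpha)_j/(\beta)_j$ is uniformly bounded by a constant independent of $\ell$ and $j$: indeed $\bigl|(\alpha)_j/(\beta)_j\bigr|=\prod_{r=0}^{j-1}\bigl|\alpha+r\bigr|/\bigl|\beta+r\bigr|$, and since $\operatorname{Re}\alpha,\operatorname{Re}\beta>0$ each factor is at most $1+\bigl|\operatorname{Im}(\alpha-\beta)\bigr|/(\operatorname{Re}\beta+r)=1+|\operatorname{Im}(a-c)|/(\operatorname{Re}\beta+r)$, and the infinite product $\prod_{r\ge0}\bigl(1+|\operatorname{Im}(a-c)|/(\operatorname{Re}\beta+r)\bigr)$ — wait, that product diverges, so I would instead bound more crudely: group terms so that $|\alpha+r|\le |\beta+r|+|a-c|$ and use that for $r$ large the ratio is $\le 2$, while the finitely many small-$r$ factors contribute a constant; the cleanest route is simply $\bigl|(\alpha)_j/(\beta)_j\bigr|\le C$ uniformly, which follows from Lemma \ref{Lemma: ratio of Pochhammer symbol} applied with the shifted parameters together with the fact that $\operatorname{Re}(\alpha-\beta)=\operatorname{Re}(a-c)$ is a fixed constant and the implied constant can be taken uniform once $\operatorname{Re}\beta>1$, say, by possibly enlarging $N$.

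Granting the uniform bound $\bigl|(\alpha)_j/(\beta)_j\bigr|\le C$, we get
\[
|B_\ell(z)|\le C\sum_{j=0}^{\infty}\frac{|z|^{N+1+j}}{(N+1+j)!}\le C\sum_{j=0}^{\infty}\frac{|z|^{j}}{j!}=C\me^{|z|},
\]
using $(N+1+j)!\ge j!$. Combining with the head estimate yields $|G_\ell(z)|\le C\me^{|z|}$ for all $\ell\ge N+1$, which is \eqref{G_l inequality} with $\gamma=1$. The main obstacle is making the uniform-in-$\ell$ bound on the Pochhammer ratio $(\alpha)_j/(\beta)_j$ genuinely uniform in \emph{both} $\ell$ and $j$ simultaneously; the device of enlarging $N$ so that $\operatorname{Re}(a+N+1)>0$ and $\operatorname{Re}(c+N+1)>1$, after which every factor $|\alpha+r|/|\beta+r|$ is bounded by a constant depending only on $|a-c|$, disposes of it cleanly and keeps $\gamma$ independent of $\ell$, $N$ and $z$ as required.
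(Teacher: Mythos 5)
There is a genuine gap at the heart of your tail estimate. You need $\bigl|(\alpha)_j/(\beta)_j\bigr|\leqslant C$ with $\alpha=a+\ell+N+1$, $\beta=c+\ell+N+1$, uniformly in $j$ (and $\ell$), but this is false whenever $\operatorname{Re}(a-c)>0$: for fixed $\ell$ one has $(\alpha)_j/(\beta)_j=\frac{\Gamma(\alpha+j)\Gamma(\beta)}{\Gamma(\alpha)\Gamma(\beta+j)}\sim\frac{\Gamma(\beta)}{\Gamma(\alpha)}\,j^{\,a-c}$ as $j\to\infty$, so it grows like $j^{\operatorname{Re}(a-c)}$. Lemma~\ref{Lemma: ratio of Pochhammer symbol}, which you invoke to justify the bound, gives exactly $Cj^{\operatorname{Re}(\alpha-\beta)}=Cj^{\operatorname{Re}(a-c)}$, not $C$; it yields boundedness only when $\operatorname{Re}(a-c)\leqslant 0$. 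The failure is not cosmetic: your final conclusion $|G_\ell(z)|\leqslant C\me^{|z|}$ (i.e., $\gamma=1$) is itself false when $\operatorname{Re}(a)>\operatorname{Re}(c)$, because ${}_1F_1[a+\ell;c+\ell;z]\sim\frac{\Gamma(c+\ell)}{\Gamma(a+\ell)}z^{a-c}\me^{z}$ as $z\to+\infty$, which eventually exceeds any fixed multiple of $\me^{z}$; consistently, the paper's proof produces $\gamma\geqslant\sqrt{2}$, not $\gamma=1$. (A smaller loose end: you ``enlarge $N$'' to get $\operatorname{Re}(a+N+1)>0$, but the statement fixes $N$ and asserts the bound for all $\ell\geqslant N+1$, so the finitely many excluded values of $\ell$ would have to be treated separately.)

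Your splitting strategy is salvageable, but the correct uniform bound is polynomial in $j$, not constant: since $|\beta+r|\geqslant\operatorname{Re}\beta+r\geqslant 1+r$ for all $\ell\geqslant N+1$ (as $\operatorname{Re}\beta=\operatorname{Re}(c+N+1)+\ell>\ell$), one gets $\bigl|(\alpha)_j/(\beta)_j\bigr|\leqslant\prod_{r=0}^{j-1}\bigl(1+\frac{|a-c|}{1+r}\bigr)\leqslant C(1+j)^{|a-c|}$ uniformly in $\ell$ and $j$. Feeding this into the tail and using Lemma~\ref{Phi_a(x) estimate} (i.e., $\Phi_s(x)\leqslant Kx^{s}\me^{x}$) together with $x^{s}\leqslant C\me^{x}$ for $x\geqslant 1$ absorbs all polynomial factors into $\me^{2|z|}$, which proves the lemma with, say, $\gamma=2$, still independent of $\ell$, $N$ and $z$. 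By contrast, the paper sidesteps this bookkeeping entirely: it quotes the Joshi--Arya inequality to reduce to positive real parameters and Carlson-type bounds for ${}_1F_1$, obtaining $|G_\ell(z)|\leqslant 2\gamma(\ell)\me^{\sqrt{2}\,\gamma(\ell)|z|}$ with $\gamma(\ell)=\max\{1,|a+\ell|/|c+\ell|\}$ bounded in $\ell$. As written, however, your argument contains a false key claim and an incorrect value of $\gamma$, so it does not stand.
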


\begin{proof}
	Since $G_{\ell}(0)=1$, we can assume that $z\ne 0$. Recall the inequality \cite[equation~(2.3)]{Joshi_Arya-1982}
	\begin{equation}\label{1F1 inequality-1}
		\biggl|{}_1F_1\bigg[\begin{matrix}
			a_0\\
			b_0
		\end{matrix};z\bigg]\biggr|\leqslant \cos\frac{\theta}{2}\cdot
		{}_1F_1\biggl[\begin{matrix}
			|a_0|\\
			|b_0|
		\end{matrix};|z|\sec\frac{\theta}{2}\bigg]
	\end{equation}
	with $\theta=\arg(b_0)\in (-\pi,\pi)$ and the inequality \cite[p. 37, equation~(3.5)]{Carlson-1966}
	\begin{equation}\label{1F1 inequality-2}
		\me^{\frac{a_0}{b_0}z}<{}_1F_1\biggl[\begin{matrix}
			a_0\\
			b_0
		\end{matrix};z\biggr]<1-\frac{a_0}{b_0}+\frac{a_0}{b_0}\me^z,\qquad b_0>a_0>0,\quad z\ne 0.
	\end{equation}
	But when $a_0\geqslant b_0>0$ and $z>0$, we have $\frac{(a_0)_k}{(b_0)_k}\leqslant \big(\frac{a_0}{b_0}\big)^k$ since $\frac{a_0+j}{b_0+j}$ decreases with respect to~${j\geqslant 0}$. Thus
	\begin{equation}\label{1F1 inequality-3}
		{}_1F_1\biggl[\begin{matrix}
			a_0\\
			b_0
		\end{matrix};z\biggr]\leqslant\me^{\frac{a_0}{b_0}z},\qquad a_0\geqslant b_0>0,\quad z>0.
	\end{equation}
	Recall $\operatorname{Re}(c+N+1)>0$ and note that $\gamma(\ell):=\max\bigl\{1,\frac{|a+\ell|}{|c+\ell|}\bigr\}$ is bounded uniformly for $\ell\in\ZZ_{\geqslant 0}$. Therefore, a combination of the inequalities \eqref{1F1 inequality-1}--\eqref{1F1 inequality-3} claims that for $\ell\geqslant N+1$,
	\[|G_\ell(z)|\leqslant 2\gamma(\ell)\me^{\sqrt{2}\gamma(\ell)|z|}\leqslant C\me^{\gamma|z|},\]
	where
	\[\gamma:=\sup_{\ell\in\ZZ_{\geqslant 0}}\sqrt{2}\gamma(\ell)\geqslant \sqrt{2}.\]
	This completes the proof.
\end{proof}

\begin{Remark}
	Lemma \ref{lem-1F1} can be easily generalized to the following form. Let $a_1,\dots,a_p,b_1,\dots$, $b_p\in\CC$, and let $N\in\ZZ_{>0}$ such that $\operatorname{Re}(b_j+N+1)>0$, $1\leqslant j\leqslant p$. Then for $\ell\geqslant N+1$,
		\begin{equation}\label{pFp estimate}
			\biggl|{}_pF_p\biggl[\begin{matrix}
				a_1+\ell,\dots,a_p+\ell\\
				b_1+\ell,\dots,b_p+\ell
			\end{matrix};z\biggr]\biggr|\leqslant C\me^{\gamma|z|},
		\end{equation}
		where $\gamma>0$ is a constant independent of $\ell$, $N$ and $z$.
\end{Remark}

\section[Asymptotics of Psi\_1 for large arguments]{Asymptotics of $\boldsymbol{\Psi_1}$ for large arguments}\label{Sect. 3}
In this section, we establish the leading asymptotic behavior of $\Psi_1$ under the condition
\begin{gather}\label{Psi_1 asymptotic condition}
	x\to \infty,\qquad |{\arg}(1-x)|<\pi,\qquad y\to +\infty,\qquad \biggl|\frac{y}{1-x}\biggr|=\gamma
\end{gather}
satisfying $0<\gamma_1\leqslant \gamma\leqslant \gamma_2<\infty$.

First of all, we derive a new series representation for $\Psi_1$. Our starting point is the behavior near unit argument of the Gauss hypergeometric function, which is given by the well-known connection formula \cite[equation~(1.2)]{Buhring-1987}
\begin{align}
		\frac{\Gamma(a)\Gamma(b)}{\Gamma(c)}{}_2F_1\biggl[\begin{matrix}
			a,b \\
			c
		\end{matrix};z\biggr]= {}& \frac{\Gamma(a)\Gamma(b)\Gamma(s)}{\Gamma(a+s)\Gamma(b+s)}
		{}_2F_1\biggl[\begin{matrix}
			a,b \\
			1-s
		\end{matrix};1-z\biggr]\nonumber\\
		 & +\Gamma(-s)(1-z)^s
		{}_2F_1\biggl[\begin{matrix}
			a+s,b+s \\
			1+s
		\end{matrix};1-z\biggr]\label{2F1 unit argument}
\end{align}
with $|\arg z|<\pi$, $|{\arg}(1-z)|<\pi$ and $s=c-a-b$. Furthermore, \eqref{2F1 unit argument} is valid if $s\notin\ZZ$.

Expanding the right-hand side of \eqref{2F1 unit argument} as follows (see \cite[equation~(1.1)]{Buhring-1992}):
\begin{align}
		\frac{\Gamma(a)\Gamma(b)}{\Gamma(c)}{}_2F_1\biggl[\begin{matrix}
			a,b \\
			c
		\end{matrix};z\biggr]&{}= \sum_{n=0}^{\infty}(-1)^n\frac{\Gamma(a+n)\Gamma(b+n)\Gamma(s-n)}{\Gamma(a+s)\Gamma(b+s)n!}(1-z)^n \nonumber\\
		&\quad{} +\sum_{n=0}^{\infty}(-1)^n\frac{\Gamma(a+s+n)\Gamma(b+s+n)\Gamma(-s-n)}{\Gamma(a+s)\Gamma(b+s)n!}(1-z)^{n+s}\label{2F1 expansion}
\end{align}
and then applying \eqref{2F1 expansion} to \eqref{Psi_1 series representation}, we get
\begin{gather}
		\Psi_1[a,b;c,c';x,y] =C_1\sum_{n=0}^{\infty}\frac{(a)_n(b)_n}{(a+b-c+1)_n}
		{}_2F_2\biggl[\begin{matrix}
			a-c+1,a+n \\
			c',a+b-c+1+n
		\end{matrix};y\biggr]\frac{ (1-x )^n}{n!}\label{Psi_1 near x=1}\\
		\qquad{} +C_2 (1-x )^{c-a-b}\sum_{n=0}^{\infty}\frac{ (c-a )_n (c-b )_n}{ (c-a-b+1 )_n}
		{}_2F_2\biggl[\begin{matrix}
			a-c+1,a+b-c-n \\
			c',a-c+1-n
		\end{matrix};\frac{y}{1-x}\biggr]\frac{ (1-x )^n}{n!},\nonumber
\end{gather}
where $|{\arg}(1-x)|<\pi$, $a+b-c\notin\ZZ$,
\[C_1=\frac{\Gamma(c)\Gamma(c-a-b)}{\Gamma(c-a)\Gamma(c-b)},\qquad C_2=\frac{\Gamma(c)\Gamma(a+b-c)}{\Gamma(a)\Gamma(b)}
\]
and $_{2}F_{2}$ is defined in \eqref{pFq definition}.
Both series in \eqref{Psi_1 near x=1} converge absolutely for $|x-1|<1$ and $|y|<\infty$.

Combining \eqref{Psi_1 Kummer transformation} with \eqref{Psi_1 near x=1} gives the following series representation.

\begin{Theorem}
	Assume that $c,c'\notin\ZZ_{\leqslant 0}$ and $a-c\notin\ZZ$. Then when $a-b\notin\ZZ$,
	\begin{equation}\label{Psi_1 large arguments}
		\Psi_1[a,b;c,c';x,y]=\mathfrak{f}_c(b,a)(1-x)^{-a}V_1(x,y)+\mathfrak{f}_c(a,b)(1-x)^{-b}V_2(x,y)
	\end{equation}
	holds for $|{\arg}(1-x)|<\pi$, $|x-1|>1$ and $|y|<\infty$, where
	\begin{align}
		&V_1(x,y):=\sum_{n=0}^{\infty}\frac{(a)_n (c-b )_n}{ (a-b+1 )_n}
		{}_2F_2\biggl[\begin{matrix}
			a-c+1,a+n \\
			c',a-b+1+n
		\end{matrix};\frac{y}{1-x}\biggr]\frac{(1-x)^{-n}}{n!},\label{V_1(x,y) definition}\\
		&V_2(x,y):=\sum_{n=0}^{\infty}\frac{(b)_n (c-a )_n}{ (b-a+1 )_n}
		{}_2F_2\biggl[\begin{matrix}
			a-c+1,a-b-n\\
			c',a-c+1-n
		\end{matrix};y\biggr]\frac{(1-x)^{-n}}{n!},\label{V_2(x,y) definition}
	\end{align}
	and
	\[\mathfrak{f}_\gamma(a,b):=\frac{\Gamma(\gamma)\Gamma(a-b)}{\Gamma(a)\Gamma(\gamma-b)}.\]
\end{Theorem}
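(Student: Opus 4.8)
Our strategy is to substitute the expansion \eqref{Psi_1 near x=1}, which is valid near $x=1$, into the right-hand side of the Kummer-type transformation \eqref{Psi_1 Kummer transformation}. Abbreviating $X:=x/(x-1)$ and $Y:=y/(1-x)$, the transformation reads
\[\Psi_1[a,b;c,c';x,y]=(1-x)^{-a}\,\Psi_1[a,c-b;c,c';X,Y],\]
so it suffices to expand $\Psi_1[a,c-b;c,c';X,Y]$ by \eqref{Psi_1 near x=1} with the replacement $b\mapsto c-b$. The key elementary fact is the identity $1-X=(1-x)^{-1}$: it gives $(1-X)^n=(1-x)^{-n}$ and $Y/(1-X)=y$, and, on principal branches (admissible because $|{\arg}(1-x)|<\pi$ implies $|{\arg}(1-X)|<\pi$, so that the domains in \eqref{Psi_1 Kummer transformation} and \eqref{Psi_1 near x=1} are compatible), $(1-X)^{c-a-(c-b)}=(1-X)^{b-a}=(1-x)^{a-b}$. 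It also converts the region of absolute convergence $|1-X|<1$ of the two series in \eqref{Psi_1 near x=1} into the region $|x-1|>1$ claimed in the theorem.

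The next step is to track the parameters under $b\mapsto c-b$ in both series of \eqref{Psi_1 near x=1}. In the first series the denominator Pochhammer $(a+b-c+1)_n$ becomes $(a-b+1)_n$, the factor $(b)_n$ becomes $(c-b)_n$, and the lower ${}_2F_2$ entry $a+b-c+1+n$ becomes $a-b+1+n$; inserting $(1-X)^n=(1-x)^{-n}$ and the argument $Y=y/(1-x)$ reproduces exactly $V_1(x,y)$ from \eqref{V_1(x,y) definition}. In the second series, $(c-a)_n(c-b)_n/(c-a-b+1)_n$ becomes $(c-a)_n(b)_n/(b-a+1)_n$, the numerator entry $a+b-c-n$ becomes $a-b-n$, and the argument $Y/(1-X)$ becomes $y$, giving $V_2(x,y)$ from \eqref{V_2(x,y) definition}. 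Substituting $b\mapsto c-b$ into the constants yields $C_1=\Gamma(c)\Gamma(b-a)/[\Gamma(c-a)\Gamma(b)]=\mathfrak{f}_c(b,a)$ and $C_2=\Gamma(c)\Gamma(a-b)/[\Gamma(a)\Gamma(c-b)]=\mathfrak{f}_c(a,b)$. Finally, distributing the prefactor $(1-x)^{-a}$: the first term keeps $(1-x)^{-a}$, while the second acquires $(1-x)^{-a}(1-X)^{b-a}=(1-x)^{-a}(1-x)^{a-b}=(1-x)^{-b}$, which is precisely \eqref{Psi_1 large arguments}.

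It remains to check the hypotheses. Expansion \eqref{2F1 expansion}, and therefore \eqref{Psi_1 near x=1}, is valid only when its shift parameter $s=c-a-b$ is not an integer; after the substitution $b\mapsto c-b$ this parameter becomes $b-a$, so the requirement is $a-b\notin\ZZ$, which is the generic case in the statement. The condition $a-c\notin\ZZ$, that is, $a-c+1\notin\ZZ$, ensures that the lower ${}_2F_2$ parameter $a-c+1-n$ appearing in $V_2$ never belongs to $\ZZ_{\leqslant 0}$, so all the ${}_2F_2$'s in $V_1$ and $V_2$ are well defined, and together with $c,c'\notin\ZZ_{\leqslant 0}$ this accounts for every denominator parameter. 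Absolute convergence of the $n$-series defining $V_1$ and $V_2$ for $|x-1|>1$, $|y|<\infty$ is inherited term-by-term from that of the two series in \eqref{Psi_1 near x=1} for $|1-X|<1$. I expect the only point requiring genuine care to be the branch bookkeeping for the powers of $1-x$ and $1-X$ and the consequent precise range of $\arg(1-x)$; everything else is a routine substitution.
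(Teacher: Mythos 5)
Your proposal is correct and follows exactly the paper's route: the paper's proof is the single line ``Combining \eqref{Psi_1 Kummer transformation} with \eqref{Psi_1 near x=1} gives the following series representation,'' and your argument simply carries out that combination in detail (substitution $b\mapsto c-b$, $x\mapsto x/(x-1)$, $y\mapsto y/(1-x)$, with the correct branch bookkeeping via $1-X=(1-x)^{-1}$ and the identification of $C_1$, $C_2$ with $\mathfrak{f}_c(b,a)$, $\mathfrak{f}_c(a,b)$). The parameter tracking, the conversion of $|1-X|<1$ into $|x-1|>1$, and the role of the hypotheses $a-b\notin\ZZ$ and $a-c\notin\ZZ$ are all verified correctly.
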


\begin{Remark}\quad
	\begin{itemize}\itemsep=0pt
		\item[(1)] The series (\ref{V_1(x,y) definition}) and (\ref{V_2(x,y) definition}) converge absolutely for $|x-1|>1$ and $|y|<\infty$.
		\item[(2)] When $s=c-a-b\in\ZZ$ in (\ref{2F1 unit argument}), the corresponding connection formulas of the Gauss hypergeometric function appear as \cite[equations~(15.3.10)--(15.3.12)]{Handbook-of-math-functions}. Thus if ${s=c-a-b\in\ZZ}$, one may derive the corresponding series representations of $\Psi_1$.
	\end{itemize}
\end{Remark}

Next we derive a uniform estimate of $_2F_2$ for large parameters by using Nagel's approach \mbox{\cite[equations~(A16)--(A19)]{Nagel-2004}}.

Define for $\varepsilon=\pm 1$ that
\begin{equation}\label{f_n(z) g_n(z) definition}
	f_n^{\varepsilon}(z):={}_2F_2\biggl[\begin{matrix}
		a,b+\varepsilon n\\
		c,d+\varepsilon n
	\end{matrix};z\biggr],\qquad g_n^{\varepsilon}(z):=\frac{\Gamma(b)\Gamma(d+\varepsilon n)}{\Gamma(d)\Gamma(b+\varepsilon n)}{}_2F_2\biggl[\begin{matrix}
		a,b\\
		c,d
	\end{matrix};z\biggr],
\end{equation}
where $_2F_{2}$ is given by \eqref{pFq definition}.

\begin{Lemma}\label{Lemma f_n-g_n estimate}
	Let $c\notin\ZZ_{\leqslant 0}$ and $d\notin\ZZ$. Then
	\begin{equation}\label{f_n-g_n upper bound}
		\bigl|f_n^{\varepsilon}(z)-g_n^{\varepsilon}(z)\bigr|\leqslant CB^n\cdot\max\bigl\{1,n^{\operatorname{Re}(d-b)}\bigr\}z^p\me^z,\qquad n\geqslant 1,\quad z\geqslant z_0,
	\end{equation}
	where
	\[B:=\sup_{m\in\ZZ}\biggl|\frac{b+m}{d+m}\biggr|\geqslant 1,\qquad p:=\operatorname{Re}(a-c)+\max\{0,\operatorname{Re}(b-d)\},\]
	and $z_0\geqslant 1$ chosen such that $z^p\me^z\geqslant 1$ holds for $z\geqslant z_0$.
\end{Lemma}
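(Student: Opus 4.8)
The plan is to collapse $f_n^{\varepsilon}-g_n^{\varepsilon}$ into a single power series, simplify its coefficients by a Gamma‑function bookkeeping, and then estimate it term by term with Lemmas~\ref{Lemma: ratio of Pochhammer symbol} and~\ref{Phi_a(x) estimate}; this is the analogue for ${}_2F_2$ of Nagel's device for comparing a parameter‑shifted hypergeometric series with the unshifted one. Write $\Lambda(t):=\Gamma(b+t)/\Gamma(d+t)$. Since $\frac{(b+\varepsilon n)_k}{(d+\varepsilon n)_k}=\frac{\Lambda(\varepsilon n+k)}{\Lambda(\varepsilon n)}$, $\frac{(b)_k}{(d)_k}=\frac{\Lambda(k)}{\Lambda(0)}$ and $\frac{\Gamma(b)\Gamma(d+\varepsilon n)}{\Gamma(d)\Gamma(b+\varepsilon n)}=\frac{\Lambda(0)}{\Lambda(\varepsilon n)}$, comparing the coefficients of $z^k/k!$ in \eqref{f_n(z) g_n(z) definition}, cancelling $\Lambda(0)$ and using that both entire series converge absolutely, one gets the key identity
\begin{equation*}
	f_n^{\varepsilon}(z)-g_n^{\varepsilon}(z)=\frac{\Gamma(d+\varepsilon n)}{\Gamma(b+\varepsilon n)}\sum_{k=0}^{\infty}\frac{(a)_k}{(c)_k}\bigl[\Lambda(\varepsilon n+k)-\Lambda(k)\bigr]\frac{z^k}{k!}.
\end{equation*}
Everything then reduces to bounding the prefactor and the series.

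\textbf{The prefactor.}
When $\varepsilon=+1$ the prefactor is $\frac{\Gamma(d)}{\Gamma(b)}\frac{(d)_n}{(b)_n}$, so Lemma~\ref{Lemma: ratio of Pochhammer symbol} bounds it by $C\max\{1,n^{\operatorname{Re}(d-b)}\}$; when $\varepsilon=-1$ I would first apply the reflection formula, $\frac{\Gamma(d-n)}{\Gamma(b-n)}=\frac{\sin\pi b}{\sin\pi d}\frac{\Gamma(n+1-b)}{\Gamma(n+1-d)}$, and then Lemma~\ref{Lemma: ratio of Pochhammer symbol} again yields the same bound. (Here one tacitly needs $b\notin\ZZ_{\leqslant 0}$ for $g_n^{\varepsilon}$ to make sense, and $b\notin\ZZ$ for the reflection step; the remaining case $b\in\ZZ_{>0}$, $\varepsilon=-1$ is disposed of separately, since then $g_n^{-}\equiv0$ for $n\geqslant b$ while $f_n^{-}$ is a polynomial whose $k$-th coefficient has modulus $\leqslant Ck^{\operatorname{Re}(a-c)}B^k$ — every factor $\frac{b-n+j}{d-n+j}$ being a ratio $\frac{b+m}{d+m}$ with $m\in\ZZ$ — so that Lemma~\ref{Phi_a(x) estimate} gives $|f_n^{-}(z)|\leqslant CB^nz^{p}\me^{z}$; the finitely many $n<b$ are absorbed into $C$.)

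\textbf{The series.}
The basic input is the uniform estimate $|\Lambda(j)|\leqslant C\max\{1,|j|\}^{\operatorname{Re}(b-d)}$, $j\in\ZZ$: for $j\geqslant0$ this is Lemma~\ref{Lemma: ratio of Pochhammer symbol} applied to $(b)_j/(d)_j$, and for $j<0$ it follows from $\Lambda(j)=\frac{\sin\pi d}{\sin\pi b}\frac{\Gamma(|j|+1-d)}{\Gamma(|j|+1-b)}$ together with Lemma~\ref{Lemma: ratio of Pochhammer symbol}. Hence, with $\mu:=\max\{0,\operatorname{Re}(b-d)\}$,
\begin{equation*}
	\bigl|\Lambda(\varepsilon n+k)-\Lambda(k)\bigr|\leqslant|\Lambda(\varepsilon n+k)|+|\Lambda(k)|\leqslant C(n+k+1)^{\mu}\leqslant C(n+1)^{\mu}(k+1)^{\mu}.
\end{equation*}
Inserting this, peeling off $k=0$, using $|(a)_k/(c)_k|\leqslant Ck^{\operatorname{Re}(a-c)}$ for $k\geqslant1$ and $(k+1)^{\mu}\leqslant2^{\mu}k^{\mu}$, the series is $\leqslant Cn^{\mu}\bigl(1+\Phi_{p}(z)\bigr)$ because $\operatorname{Re}(a-c)+\mu=p$ by definition; by Lemma~\ref{Phi_a(x) estimate} and the choice of $z_0$ this is $\leqslant Cn^{\mu}z^{p}\me^{z}$ for $z\geqslant z_0$. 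Multiplying by the prefactor bound gives $|f_n^{\varepsilon}(z)-g_n^{\varepsilon}(z)|\leqslant Cn^{\mu}\max\{1,n^{\operatorname{Re}(d-b)}\}z^{p}\me^{z}$, and finally $n^{\mu}\leqslant CB^n$: indeed $\mu>0$ forces $\operatorname{Re}(b-d)>0$, whence $|b+m|^2-|d+m|^2=(|b|^2-|d|^2)+2m\operatorname{Re}(b-d)\to+\infty$ as $m\to+\infty$ and so $B>1$. This is \eqref{f_n-g_n upper bound}.

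\textbf{Main obstacle.}
The one genuinely delicate point is the uniform control of $|\Lambda(j)|$ for \emph{negative} $j$, i.e.\ handling the Gamma quotient when the arguments are large and negative — which is exactly where the hypothesis $d\notin\ZZ$ (and, in the degenerate situation, the dichotomy $b\notin\ZZ$ versus $b\in\ZZ_{>0}$) is used; once that is secured, the rest is bookkeeping of powers of $n$ and of $z$.
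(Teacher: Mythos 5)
Your proof is correct, but it goes by a genuinely different route than the paper's. The paper exploits no cancellation between $f_n^{\varepsilon}$ and $g_n^{\varepsilon}$: it bounds $|g_n^{\varepsilon}(z)|$ and $|f_n^{\varepsilon}(z)-1|$ separately (the stray $1$ being absorbed because $z^p\me^z\geqslant 1$ for $z\geqslant z_0$), and for $f_n^{\varepsilon}-1$ it splits the series at $\ell=n$, using the termwise bound $|(b-n)_\ell/(d-n)_\ell|\leqslant B^{\ell}$ for $\ell\leqslant n$ — which is where $B^n$ enters — and the factorization $(b-n)_\ell=(-1)^n(1-b)_n(b)_{\ell-n}$ together with Lemma~\ref{Lemma: ratio of Pochhammer symbol} for $\ell>n$; everything stays at the level of Pochhammer ratios, no reflection formula is used, and (for $\varepsilon=-1$) arbitrary $b\in\CC$ is allowed. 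You instead collapse the difference into a single series with coefficients $\Lambda(\varepsilon n+k)-\Lambda(k)$ and control $|\Lambda(j)|$ uniformly over $j\in\ZZ$ via Lemma~\ref{Lemma: ratio of Pochhammer symbol} plus reflection for negative $j$; the $B^n$ of the statement only appears at the very end through the soft inequality $n^{\max\{0,\operatorname{Re}(b-d)\}}\leqslant CB^n$, and your argument that $B>1$ whenever $\operatorname{Re}(b-d)>0$ is correct. What your route buys is a sharper, purely polynomial-in-$n$ intermediate bound $Cn^{\max\{0,\operatorname{Re}(b-d)\}}\max\bigl\{1,n^{\operatorname{Re}(d-b)}\bigr\}z^p\me^z$, which is more than enough for Theorem~\ref{Theorem V(z)-V^*(z) estimate}; what it costs is the extra restriction $b\notin\ZZ$ in the main argument, so integer $b$ must be patched by hand. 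Your patch treats $b\in\ZZ_{>0}$, $\varepsilon=-1$, but note that for $b\in\ZZ_{\leqslant 0}$, $\varepsilon=-1$ the quantity $g_n^{-}$ does make sense (it equals $\frac{(1-b)_n}{(1-d)_n}\,{}_2F_2[a,b;c,d;z]$, which is the interpretation the paper's proof uses), so that case is not really excluded by the statement; it is, however, disposed of by exactly the same termination argument you give (the series for $f_n^{-}$ now stops at $k=n-b\leqslant n+|b|$, so the factor $B^{k}\leqslant B^{|b|}B^{n}$ is still admissible), and it never occurs in the paper's application, where the role of $b$ is played by $a-b\notin\ZZ$.
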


\begin{proof}
	The constant $B$ exists and satisfies $B\geqslant 1$, since $\lim_{|m|\to\infty}\frac{b+m}{d+m}=1$. Next, we just prove the inequality for $\varepsilon=-1$ since the proof for $\varepsilon=1$ is similar.
	
	Note that
	\[g_n^{\varepsilon}(z)=\frac{(1-b)_n}{(1-d)_n}\sum_{\ell=0}^{\infty}\frac{(a)_\ell(b)_\ell}{(c)_\ell(d)_\ell}\frac{z^{\ell}}{\ell!}.\]
	Since $q:=\operatorname{Re}(a+b-c-d)\leqslant p$, we get from Lemmas \ref{Lemma: ratio of Pochhammer symbol} and \ref{Phi_a(x) estimate} that for $z\geqslant z_0$,
	\begin{align}
		\bigl|g_n^{\varepsilon}(z)\bigr|&{} \leqslant Cn^{\operatorname{Re}(d-b)}+Cn^{\operatorname{Re}(d-b)}\sum_{\ell=1}^{\infty}\ell^{q}\frac{z^\ell}{\ell!}\leqslant Cn^{\operatorname{Re}(d-b)}(1+\Phi_q(z))\nonumber\\
		&{} \leqslant Cn^{\operatorname{Re}(d-b)}z^q\me^z\leqslant Cn^{\operatorname{Re}(d-b)}z^p\me^z.\label{g_n(z) upper bound}
	\end{align}
	
	Assume that $z\geqslant z_0$ and write
	\[f_n^{\varepsilon}(z)-1=\Biggl(\sum_{\ell=1}^{n}+\sum_{\ell=n+1}^{\infty}\Biggr)\frac{(a)_\ell(b-n)_\ell}{(c)_\ell(d-n)_\ell}\frac{z^\ell}{\ell!}=:S_1^*+S_2^*.\]
	If $1\leqslant \ell\leqslant n$, then
	\[\biggl|\frac{(b-n)_\ell}{(d-n)_\ell}\biggr|=\prod_{j=0}^{\ell-1}\biggl|\frac{b-n+j}{d-n+j}\biggr|\leqslant B^{\ell},\]
	and using Lemma \ref{Lemma: ratio of Pochhammer symbol} can get
	\begin{equation}\label{S_1^* estimate}
		|S_1^*|\leqslant C\sum_{\ell=1}^{n}B^{\ell}\ell^{\operatorname{Re}(a-c)}\frac{z^\ell}{\ell!}\leqslant CB^n\sum_{\ell=1}^{n}\ell^{\operatorname{Re}(a-c)}\frac{z^\ell}{\ell!}.
	\end{equation}
	If $\ell\geqslant n+1$, adopt Lemma \ref{Lemma: ratio of Pochhammer symbol} to obtain
	\[\biggl|\frac{(b-n)_\ell}{(d-n)_\ell}\biggr|=\biggl|\frac{(1-b)_n(b)_{\ell-n}}{(1-d)_n(d)_{\ell-n}}\biggr|\leqslant Cn^{\operatorname{Re}(d-b)}(\ell-n)^{\operatorname{Re}(b-d)}.\]
	Thus
	\[|S_2^*|\leqslant Cn^{\operatorname{Re}(d-b)}\sum_{\ell=n+1}^{\infty}\ell^{\operatorname{Re}(a-c)}(\ell-n)^{\operatorname{Re}(b-d)}\frac{z^\ell}{\ell!}.\]
	Since $1\leqslant \ell-n<\ell$, we have $(\ell-n)^{\operatorname{Re}(b-d)}\leqslant \ell^{\max\{0,\operatorname{Re}(b-d)\}}$. Therefore,
	\begin{equation}\label{S_2^* estimate}
		|S_2^*|\leqslant Cn^{\operatorname{Re}(d-b)}\sum_{\ell=n+1}^{\infty}\ell^{\operatorname{Re}(a-c)+\max\{0,\operatorname{Re}(b-d)\}}\frac{z^\ell}{\ell!}.
	\end{equation}
	
	Using Lemma \ref{Phi_a(x) estimate}, the inequality \eqref{f_n-g_n upper bound} follows from \eqref{g_n(z) upper bound}--\eqref{S_2^* estimate}.
\end{proof}

A direct application of Lemma \ref{Lemma f_n-g_n estimate} gives the following theorem.

\begin{Theorem}\label{Theorem V(z)-V^*(z) estimate}
	Let $\{v_n\}$ be a sequence satisfying $v_0=1$ and $v_n=\mo(\me^{rn})$, where $r>1$. Define
	\[V(z):=\sum_{n=0}^{\infty}v_n f_n^{\varepsilon}(z)z^{-n},\qquad V^*(z):=\sum_{n=0}^{\infty}v_n g_n^{\varepsilon}(z)z^{-n},\qquad z\ne 0,\]
	where $f_n^{\varepsilon}(z)$ and $g_n^{\varepsilon}(z)$ are given by \eqref{f_n(z) g_n(z) definition}, $c\notin\ZZ_{\leqslant 0}$ and $d\notin\ZZ$. Then
	\[V(z)-V^*(z)=\mo\big(z^{p-1}\me^z\big),\qquad z\to +\infty,\]
	where $p=\operatorname{Re}(a-c)+\max\{0,\operatorname{Re}(b-d)\}$.
\end{Theorem}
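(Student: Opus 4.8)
The plan is to bound $V(z)-V^{*}(z)$ term by term using Lemma~\ref{Lemma f_n-g_n estimate}. Writing
\[
V(z)-V^{*}(z)=\sum_{n=0}^{\infty}v_{n}\bigl(f_{n}^{\varepsilon}(z)-g_{n}^{\varepsilon}(z)\bigr)z^{-n},
\]
I first observe that the $n=0$ term vanishes, since $f_{0}^{\varepsilon}=g_{0}^{\varepsilon}={}_2F_2[a,b;c,d;z]$ and $v_{0}=1$, so the sum effectively runs from $n=1$. For $n\geqslant1$ and $z\geqslant z_{0}$, Lemma~\ref{Lemma f_n-g_n estimate} gives $\bigl|f_{n}^{\varepsilon}(z)-g_{n}^{\varepsilon}(z)\bigr|\leqslant CB^{n}\max\{1,n^{\operatorname{Re}(d-b)}\}z^{p}\me^{z}$, while the hypothesis on $\{v_{n}\}$ gives $|v_{n}|\leqslant C\me^{rn}$.

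Combining these, for $z\geqslant z_{0}$ we get
\[
\bigl|V(z)-V^{*}(z)\bigr|\leqslant C z^{p}\me^{z}\sum_{n=1}^{\infty}\me^{rn}B^{n}\max\bigl\{1,n^{\operatorname{Re}(d-b)}\bigr\}z^{-n}.
\]
The key step is then to show that the remaining series is $\mo(z^{-1})$ as $z\to+\infty$; that will upgrade the crude bound $\mo(z^{p}\me^{z})$ to the claimed $\mo(z^{p-1}\me^{z})$. To see this, factor out the $n=1$ term: for $z$ large enough that $\me^{r}B/z\leqslant\tfrac12$ (say), the ratio test shows $\sum_{n\geqslant1}(\me^{r}B/z)^{n}\max\{1,n^{\operatorname{Re}(d-b)}\}\leqslant C\,\me^{r}B/z$, because the polynomial factor $\max\{1,n^{\operatorname{Re}(d-b)}\}$ is absorbed by the geometric decay once $z$ exceeds a threshold depending only on $r$, $B$ and $\operatorname{Re}(d-b)$ — concretely, $\sum_{n\geqslant1}n^{|\operatorname{Re}(d-b)|}t^{n}=\mo(t)$ as $t\to0^{+}$. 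Hence the whole sum is $\mo(1/z)$, and multiplying back by $Cz^{p}\me^{z}$ yields $\bigl|V(z)-V^{*}(z)\bigr|=\mo(z^{p-1}\me^{z})$.

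The only genuine subtlety — and the step I would write out most carefully — is the interchange of summation and the estimate, i.e.\ justifying that $\sum_{n}v_{n}\bigl(f_{n}^{\varepsilon}-g_{n}^{\varepsilon}\bigr)z^{-n}$ converges absolutely for $z$ in the relevant range so that the termwise bound is legitimate; but this is immediate from the same geometric bound, since for fixed $z>\me^{r}B$ the majorant series converges. Everything else is the routine observation that a convergent power-series-type tail with geometric decay in $1/z$ contributes one extra power of $z^{-1}$ beyond its leading term. I expect no real obstacle here; the content is entirely in having set up Lemma~\ref{Lemma f_n-g_n estimate} with the right uniformity in $n$, which has already been done.
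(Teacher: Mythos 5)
Your proof is correct and follows the same route the paper intends: the paper gives no detailed argument, stating only that the theorem is ``a direct application of Lemma~\ref{Lemma f_n-g_n estimate}'', and your term-by-term use of that lemma together with the observation that $\sum_{n\geqslant 1}n^{\sigma}t^{n}=\mo(t)$ as $t\to 0^{+}$ (with $t=\me^{r}B/z$) is exactly that application, correctly yielding the extra factor $z^{-1}$.
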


\begin{Remark}
	By substituting the asymptotic expansion of $_2F_2$ (see \cite[p.~380]{Paris-2005})
	\begin{equation}\label{2F2 large z}
		{}_2F_2\biggl[\begin{matrix}
			a,b+\nu\\
			c,d+\nu
		\end{matrix};z\biggr]\sim \frac{\Gamma(c)\Gamma(d+\nu)}{\Gamma(a)\Gamma(b+\nu)}z^{a+b-c-d}\me^z,\qquad z\to \infty,\quad|\arg z|<\pi/2
	\end{equation}
	into the series
	\[A(z):=\sum_{\nu=0}^{\infty}\frac{z^{-\nu}\Gamma(\alpha+\gamma+\nu)}{\nu!\Gamma(\beta+1+\nu)}{}_2F_2\biggl[\begin{matrix}
		\alpha+1,\beta+\nu\\
		\delta,\beta+1+\nu
	\end{matrix};-\frac{1}{z}\biggr],\]
	Jur\v{s}\.enas \cite[Section~4.2]{Jursenas-2014} claimed that
	\begin{equation}\label{A(z) asymptotics}
		A(z)\sim \frac{\Gamma(\alpha+\gamma)\Gamma(\delta)}{\Gamma(\alpha+1)\Gamma(\beta-\alpha-\gamma)}\mathrm{e}^{-1/z}(-z)^{\gamma+\delta},\qquad |z|\to 0,\quad\operatorname{Re}(z)<0.
	\end{equation}
	The reminder term in the expansion (\ref{2F2 large z}) is given in \cite{Lin_Wong-2018}, but it is not valid uniformly for large~$\nu$ and large~$z$. Thus, Jur\v{s}\.enas' expansion is not rigorous. Furthermore, Lemma \ref{Lemma f_n-g_n estimate} fails for the asymptotics of $A(z)$, so it is of interest to give a rigorous proof of (\ref{A(z) asymptotics}) in our future work.
\end{Remark}

We now state and prove the main result.

\begin{Theorem}\label{Thm: Psi_1 two large arguments}
	Assume that
	\[c,c'\notin\ZZ_{\leqslant 0},\qquad a-b,a-c\notin\ZZ,\qquad \operatorname{Re}(c-b)>0.\]
	Then under the condition \eqref{Psi_1 asymptotic condition},
	\begin{equation}\label{Thm: Psi_1 two large arguments 1}
		\Psi_1[a,b;c,c';x,y]\sim \frac{\Gamma(c)\Gamma(c')}{\Gamma(a)\Gamma(c-b)}\biggl(\frac{y}{1-x}\biggr)^b y^{a-2b-c'}\me^y.
	\end{equation}
\end{Theorem}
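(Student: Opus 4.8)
The plan is to combine the series representation \eqref{Psi_1 large arguments} with the $_2F_2$ estimates of Lemma~\ref{Lemma f_n-g_n estimate} and Theorem~\ref{Theorem V(z)-V^*(z) estimate}, applied to $V_1(x,y)$ and $V_2(x,y)$ separately. First I would set $w:=y/(1-x)$, so that under the condition \eqref{Psi_1 asymptotic condition} we have $|w|=\gamma$ bounded away from $0$ and $\infty$ while $|1-x|\to\infty$ and $y\to+\infty$. The key observation is that $V_2(x,y)$ in \eqref{V_2(x,y) definition} has argument $y\to+\infty$ in its inner $_2F_2$, with the large parameter $-n$ appearing in both an upper and a lower slot; this is exactly the $\varepsilon=-1$ case of \eqref{f_n(z) g_n(z) definition} with $b=a-b$, $d=a-c+1$, $a\mapsto a-c+1$, $c\mapsto c'$. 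So Theorem~\ref{Theorem V(z)-V^*(z) estimate} applies (after checking the geometric growth hypothesis $v_n=\mathcal{O}(\me^{rn})$ on the coefficients $(b)_n(c-a)_n/(b-a+1)_n$, which is polynomial by Lemma~\ref{Lemma: ratio of Pochhammer symbol}, hence certainly $\mathcal{O}(\me^{rn})$, and noting the extra $(1-x)^{-n}$ only helps): it lets us replace $V_2$ by its ``decoupled'' version $V_2^*$ up to a relative error of order $z^{-1}=y^{-1}$. The leading term of $V_2^*$ is then the $n=0$ summand, namely $_2F_2[a-c+1,a-b;c',a-c+1;y]={}_1F_1[a-b;c';y]$, whose $y\to+\infty$ asymptotics $\sim \Gamma(c')/\Gamma(a-b)\,y^{a-b-c'}\me^y$ is classical; the $n\ge 1$ terms carry an extra $(1-x)^{-n}\to 0$ and are negligible.

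Next I would show that the $V_1$ contribution is of strictly lower order and may be discarded. In \eqref{V_1(x,y) definition} the inner $_2F_2$ has argument $w=y/(1-x)$ of bounded modulus, so $|{}_2F_2[a-c+1,a+n;c',a-b+1+n;w]|$ is $\mathcal{O}(C^n)$ uniformly (by the Remark after Lemma~\ref{lem-1F1}, applied to the shifted parameters, or directly from Lemma~\ref{lem-1F1} since the argument is bounded), while the coefficient ratio $(a)_n(c-b)_n/(a-b+1)_n$ is polynomial in $n$; together with $(1-x)^{-n}/n!$ and $|1-x|\to\infty$ this gives $V_1(x,y)=\mathcal{O}(1)$, in fact $V_1\to 1$ (the $n=0$ term $\to{}_1F_1[a-c+1;c';0]$ if the argument were $0$ — more carefully $V_1=\mathcal{O}(1)$ suffices). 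Hence $\mathfrak{f}_c(b,a)(1-x)^{-a}V_1(x,y)=\mathcal{O}(|1-x|^{-\operatorname{Re}a})$, whereas the $V_2$ term has size $\asymp |1-x|^{-\operatorname{Re}b}\,y^{\operatorname{Re}(a-b-c')}\me^y$. Since $y\to+\infty$ and $|1-x|=|y|/\gamma\to\infty$, the exponential $\me^y$ in the $V_2$ term dominates the purely power-type $V_1$ term, so the $V_1$ term is asymptotically negligible. Assembling: $\Psi_1\sim \mathfrak{f}_c(a,b)(1-x)^{-b}\cdot\frac{\Gamma(c')}{\Gamma(a-b)}y^{a-b-c'}\me^y$, and substituting $\mathfrak{f}_c(a,b)=\Gamma(c)\Gamma(a-b)/[\Gamma(a)\Gamma(c-b)]$ collapses the $\Gamma(a-b)$ factors and, after writing $(1-x)^{-b}=(y/(1-x))^b y^{-b}$, yields exactly \eqref{Thm: Psi_1 two large arguments 1}.

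A few technical points need care. One must verify that \eqref{Psi_1 large arguments} is legitimately applicable: the hypotheses $c,c'\notin\ZZ_{\leqslant 0}$, $a-b,a-c\notin\ZZ$ are assumed, and the region $|x-1|>1$, $|{\arg}(1-x)|<\pi$ is guaranteed eventually under \eqref{Psi_1 asymptotic condition} since $|1-x|\to\infty$. One must also confirm that the two $_2F_2$ series defining $V_1,V_2$ can be handled termwise and that the error estimate from Theorem~\ref{Theorem V(z)-V^*(z) estimate} is uniform in the relevant parameter regime — here the parameter $w=y/(1-x)$ entering $V_1$ stays bounded and the variable $y$ entering $V_2$ tends to $+\infty$ along $|\arg y|=0<\pi/2$, which is within the validity range. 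The condition $\operatorname{Re}(c-b)>0$ is what ensures $\Gamma(c-b)$ is finite and, more substantively, is presumably needed for the interchange of summation/limit and the absolute convergence bookkeeping (it controls the coefficient $(c-b)_n$ versus $(c-a)_n$ growth).

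\textbf{Main obstacle.} The crux is not the formal manipulation but justifying that Theorem~\ref{Theorem V(z)-V^*(z) estimate} may be invoked for $V_2$ with the \emph{additional} damping factor $(1-x)^{-n}$ present, and — more delicately — controlling the error \emph{uniformly} as both $y\to+\infty$ and $1-x\to\infty$ are coupled by $|y/(1-x)|=\gamma$. In particular one needs that the $\mathcal{O}(z^{p-1}\me^z)$ error in Theorem~\ref{Theorem V(z)-V^*(z) estimate}, which is stated for fixed coefficients $v_n$, survives when the coefficients themselves depend on $x$ through $(1-x)^{-n}$; since those extra factors have modulus $\le 1$ eventually and are absorbed into the $\mathcal{O}(\me^{rn})$ bound with room to spare, this works, but it requires re-running the proof of Theorem~\ref{Theorem V(z)-V^*(z) estimate} with $v_n$ replaced by $v_n(1-x)^{-n}$ and checking the geometric-series step $\sum_n B^n|v_n|\,|1-x|^{-n}<\infty$ uniformly. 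Establishing this uniformity, together with pinning down that the $n=0$ term genuinely dominates the tail in $V_2^*$ (again using $|1-x|^{-1}\to 0$), is where the real work lies.
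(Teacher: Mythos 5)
Your overall route coincides with the paper's: start from \eqref{Psi_1 large arguments}, apply Theorem~\ref{Theorem V(z)-V^*(z) estimate} to $V_2$ (after absorbing the factor $(y/(1-x))^n$ into the coefficients so that $V_2=\sum_n v_n f_n^{-}(y)y^{-n}$ with $|v_n|\leqslant C\gamma_2^n/n!$ uniformly under \eqref{Psi_1 asymptotic condition}), identify the decoupled series as $V_2^*={}_1F_1[a-b;c';y]\,(1-\frac{1}{1-x})^{-b}$, use the classical large-$y$ asymptotics of $_1F_1$ coming from \eqref{2F2 large z}, and discard the $V_1$ term as power-type against $\me^y$; the uniformity issue you flag for the $x$-dependent coefficients is real but harmless, exactly as you argue, and your treatment of $V_1$ and of the Gamma-factor bookkeeping leading to \eqref{Thm: Psi_1 two large arguments 1} is fine.

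The genuine gap is at the decisive quantitative step. Theorem~\ref{Theorem V(z)-V^*(z) estimate} gives $V_2-V_2^*=\mo\bigl(y^{p-1}\me^y\bigr)$ with, for the parameters of \eqref{V_2(x,y) definition}, $p=\operatorname{Re}(a-c+1-c')+\max\{0,\operatorname{Re}(c-b-1)\}$, and one must verify that this error is negligible against the main term $\frac{\Gamma(c')}{\Gamma(a-b)}y^{a-b-c'}\me^y$, i.e., that $\operatorname{Re}(a-b-c')>p-1$. A short computation shows this inequality is \emph{equivalent} to $\operatorname{Re}(c-b)>0$: this is exactly where the hypothesis enters, and it is the crux of the paper's proof. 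Your sketch instead asserts an unconditional relative error of order $y^{-1}$, which is false in general: when $0<\operatorname{Re}(c-b)<1$ the relative error is only $y^{-\operatorname{Re}(c-b)}$, and when $\operatorname{Re}(c-b)\leqslant 0$ the error term is at least as large as the putative main term, so no asymptotic conclusion can be drawn at all. Correspondingly, attributing $\operatorname{Re}(c-b)>0$ to ``interchange of summation/limit and absolute-convergence bookkeeping'' or to finiteness of $\Gamma(c-b)$ misidentifies its role. Without computing $p$ for the specific parameter substitution $a\mapsto a-c+1$, $b\mapsto a-b$, $c\mapsto c'$, $d\mapsto a-c+1$ and comparing it with $\operatorname{Re}(a-b-c')$, the dominance of the leading term, and hence the theorem, is not established.
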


\begin{proof}
	Recall \eqref{Psi_1 large arguments} and note that $V_1(x,y)$ converges absolutely. Thus, the main contribution of $\Psi_1$ comes from $V_2(x,y)$. Using \eqref{2F2 large z} and Theorem \ref{Theorem V(z)-V^*(z) estimate}, we can obtain
	\begin{align}
		V_2(x,y)&{} ={}_1F_1\biggl[\begin{matrix}
			a-b\\
			c'
		\end{matrix};y\biggr]{}_1F_0\biggl[\begin{matrix}
			b\\
			-
		\end{matrix};\frac{1}{1-x}\biggr]+\mo\bigl(y^{p-1}\me^y\bigr)\nonumber\\
		&{} \sim \frac{\Gamma(c')}{\Gamma(a-b)}y^{a-b-c'}\me^y+\mo\bigl(y^{p-1}\me^y\bigr), \label{V_2(x,y) asymptotics}
	\end{align}
	where $p=\operatorname{Re}(a-c-c'+1)+\max\{0,\operatorname{Re}(c-b-1)\}$. Since
	\[\operatorname{Re}(a-b-c')>p-1 \ \Leftrightarrow \ \operatorname{Re}(c-b)>0,\]
	the result follows from \eqref{Psi_1 large arguments} and \eqref{V_2(x,y) asymptotics}.
\end{proof}

Numerical verification of Theorem \ref{Thm: Psi_1 two large arguments} is given in Appendix \ref{Appendix A}.

\section{Asymptotics of the generalized hypergeometric function}\label{Sect. 4}
Our derivation in Section~\ref{Sect. 3} depends on a rough estimate of $_2F_2$ for large $-n$. In this section, Nagel's approach is also used to explicitly establish the asymptotic behavior of $_2F_2$ for large parameters. We also present the asymptotic behavior of $_pF_q$ for large $-n$.

\subsection[Asymptotics of \_2F\_2]{Asymptotics of $\boldsymbol{_2F_2}$}
Let us examine the complete asymptotic expansions of $_2F_2$ for large parameters.

\begin{Theorem}\label{Thm: 2F2 large parameter--lambda}
	Let $\delta\in (0,\pi)$, $c\notin\ZZ_{\leqslant 0}$ and $z\ne 0$. Then for any positive integer $N$,
	\begin{equation}\label{2F2 large parameter--lambda}
		{}_2F_2\biggl[\begin{matrix}
			a,b+\lambda\\
			c,d+\lambda
		\end{matrix};z\biggr]=\sum_{k=0}^{N-1}\frac{(a)_k (d-b )_k}{(c)_k (d+\lambda )_k}\frac{ (-z )^k}{k!}{}_1F_1\biggl[\begin{matrix}
			a+k\\
			c+k
		\end{matrix};z\biggr]+\mo \bigl(\lambda^{-N} \bigr),
	\end{equation}
	where $\lambda\to\infty$ in the sector $|{\arg}(\lambda+d)|\leqslant \pi-\delta$.
\end{Theorem}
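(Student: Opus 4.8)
The plan is to obtain the expansion \eqref{2F2 large parameter--lambda} by iterating the contiguous/splitting identity that underlies Nagel's approach. Starting from the series definition \eqref{pFq definition} of $_2F_2$, write
\[
\frac{(b+\lambda)_\ell}{(d+\lambda)_\ell}=1+\Bigl(\frac{(b+\lambda)_\ell}{(d+\lambda)_\ell}-1\Bigr),
\]
so that the $\ell=0$ contribution from the first term reproduces $_1F_1[a;c;z]$, while the correction telescopes. More efficiently, I would follow Nagel \cite[equations~(A16)--(A19)]{Nagel-2004} directly: there is an identity expressing $f^{\varepsilon}$-type sums in terms of $_1F_1$ plus a single remainder integral (or remainder series) in which the roles of $b$ and $d$ are shifted by one. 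Applying it once peels off the $k=0$ term $_1F_1[a;c;z]$ and leaves a remainder of the same shape with $(b,d)$ replaced by $(b+1,d+1)$ and an extra factor $\tfrac{a(d-b)}{c(d+\lambda)}(-z)$. Iterating $N$ times produces the finite sum $\sum_{k=0}^{N-1}\tfrac{(a)_k(d-b)_k}{(c)_k(d+\lambda)_k}\tfrac{(-z)^k}{k!}\,{}_1F_1[a+k;c+k;z]$ together with a closed-form remainder $R_N(\lambda,z)$ carrying the prefactor $\tfrac{(a)_N(d-b)_N}{(c)_N(d+\lambda)_N}\tfrac{(-z)^N}{N!}$ times a $_2F_2$ with parameters shifted by $N$.

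The second step is to bound $R_N(\lambda,z)$. The prefactor contributes a factor $(d+\lambda)_N^{-1}=\mo(\lambda^{-N})$ uniformly for $\lambda$ in the sector $|\arg(\lambda+d)|\le\pi-\delta$, since in that sector $|(d+\lambda)_N|\ge c(\delta,d,N)\,|\lambda|^{N}$ for $|\lambda|$ large (this is where the restriction away from $\arg=\pm\pi$ is used, to keep $d+\lambda+j$ away from the negative reals and from $0$). The residual $_2F_2$ with all lower parameters shifted by $+N$ — i.e. of the form $_2F_2[a+N,\,b+\lambda+N;\,c+N,\,d+\lambda+N;z]$ — must be shown bounded by a constant $C(z)$ independent of $\lambda$. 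For this I would invoke the $_pF_p$-type estimate recorded in the Remark after Lemma~\ref{lem-1F1} (inequality~\eqref{pFp estimate}), applied after writing the shifted $_2F_2$ in the $G_\ell$-style normalization; choosing $N$ large enough that $\operatorname{Re}(c+N+1)>0$ and $\operatorname{Re}(d+N+1)>0$ makes the bound $|{}_2F_2[\cdots;z]|\le C\me^{\gamma|z|}$ hold with $\gamma$ independent of $\lambda$. Since here the ``$\ell$-shift'' is in the lower parameters only and $\lambda$ sits symmetrically in one upper and one lower slot, the boundedness in $\lambda$ follows from the same Joshi--Arya/Carlson inequalities used in Lemma~\ref{lem-1F1}. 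Combining, $R_N(\lambda,z)=\mo(\lambda^{-N})$ with $z$ fixed, as claimed.

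The main obstacle I anticipate is making the uniformity in $\lambda$ of the remainder $_2F_2$ genuinely rigorous: the naive termwise bound on $(b+\lambda+N)_\ell/(d+\lambda+N)_\ell$ is uniformly $\mo(1)$ in $\ell$ and $\lambda$ (ratio tends to $1$), but one must be careful that the implied constant does not degenerate as $\arg(\lambda+d)\to\pm\pi$ or as $|\lambda|$ stays moderate; handling the moderate-$|\lambda|$ range separately (where $_2F_2$ is just a fixed entire function of $z$ depending continuously on finitely many bounded parameters) and the large-$|\lambda|$ range via \eqref{pFp estimate} resolves this. A secondary technical point is verifying that the telescoping identity from \cite{Nagel-2004} is valid termwise and that the interchange of summations used to recast the remainder is legitimate — this is justified by absolute convergence of the $_2F_2$ series together with the uniform geometric-type control of the Pochhammer ratios on $1\le\ell\le$ (shift) established exactly as in the proof of Lemma~\ref{Lemma f_n-g_n estimate}.
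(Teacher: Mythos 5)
Your overall strategy (peel off $N$ terms, then show the remainder is $\mo\bigl(\lambda^{-N}\bigr)$ uniformly in the sector) is reasonable, but the identity you build the estimate on is false. For $N=1$ your claim reads
\[
{}_2F_2\biggl[\begin{matrix} a,b+\lambda\\ c,d+\lambda\end{matrix};z\biggr]
={}_1F_1\biggl[\begin{matrix} a\\ c\end{matrix};z\biggr]
-\frac{a(d-b)z}{c(d+\lambda)}\,{}_2F_2\biggl[\begin{matrix} a+1,b+\lambda+1\\ c+1,d+\lambda+1\end{matrix};z\biggr],
\]
and comparing coefficients of $z^2$ (write $B=b+\lambda$, $D=d+\lambda$) the two sides differ by $\frac{a(a+1)}{c(c+1)D(D+1)}\cdot\frac{(D-B)(D-B-1)}{2}$, which vanishes only in degenerate cases such as $d-b\in\{0,1\}$. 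The exact remainder after $N$ terms is the tail $\sum_{k\geqslant N}$ of the convergent expansion \eqref{2F2 transformation}; each of its terms still carries a factor ${}_1F_1[a+k;c+k;z]$ with $k$ running to infinity, so the remainder is a double (Kamp\'e de F\'eriet-type) series, not a single parameter-shifted ${}_2F_2$. Hence the step in which you bound ``the residual ${}_2F_2[a+N,b+\lambda+N;c+N,d+\lambda+N;z]$'' via \eqref{pFp estimate} does not apply to the actual remainder (and, as a side point, \eqref{pFp estimate} is stated for a common integer shift $\ell$ of all parameters, not for a complex $\lambda$ occupying one upper and one lower slot and ranging over a sector). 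This is the genuine gap: the $\mo(\lambda^{-N})$ bound is proved for an object that is not the remainder.

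The gap is repairable, but then you are essentially reproving the result the paper simply quotes: the paper's proof identifies \eqref{2F2 transformation} as Fields' confluent expansion and invokes Fields' Theorem~3, which delivers both the convergent representation (for $\lambda+d\notin\ZZ_{\leqslant 0}$) and its asymptotic character as $\lambda\to\infty$ with $|{\arg}(\lambda+d)|\leqslant\pi-\delta$; the Nagel-type splitting machinery is reserved in the paper for Theorem~\ref{Thm: 2F2 large parameter--n}, where $\lambda=-n$ violates the sector condition. A self-contained argument in the spirit of your sketch must work with the true tail of \eqref{2F2 transformation}: bound $\bigl|{}_1F_1[a+k;c+k;z]\bigr|\leqslant C\me^{\gamma|z|}$ uniformly in $k$ by Lemma~\ref{lem-1F1} (small $k$ being a finite set of fixed entire functions), bound $\bigl|(a)_k(d-b)_k/((c)_k\,k!)\bigr|\leqslant Ck^{\operatorname{Re}(a+d-b-c)-1}$ by Lemma~\ref{Lemma: ratio of Pochhammer symbol}, and prove a sector estimate valid for all factors simultaneously, e.g.\ $|d+\lambda+j|\geqslant\sqrt{2}\,\sin(\delta/2)\max\{|d+\lambda|,j\}$ for $j\geqslant 0$, which gives $|(d+\lambda)_k|^{-1}\leqslant C_N\bigl(\sqrt{2}\sin(\delta/2)\bigr)^{-k}|d+\lambda|^{-N}/(k-1)!$ for $k\geqslant N$; summing over $k\geqslant N$ then yields $\mo\bigl(\lambda^{-N}\bigr)$ for each fixed $z\ne 0$. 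Your observation that $(d+\lambda)_N^{-1}=\mo(\lambda^{-N})$ in the sector is correct, but it only controls the prefactor of the first tail term, not the full $k$-sum, which is where the uniform-in-$\lambda$ work actually lies.
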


\begin{proof}
	If denoting
	\[v(z):={}_1F_1\biggl[\begin{matrix}
		a\\
		c
	\end{matrix};z\biggr],\qquad d_k:=\frac{(a)_k}{(c)_k k!},\]
	we can obtain
	\[{}_2F_2\biggl[\begin{matrix}
		a,b+\lambda\\
		c,d+\lambda
	\end{matrix};z\biggr]=\sum_{k=0}^{\infty}\frac{ (\lambda+b )_k}{ (\lambda+d )_k}d_k z^k.\]
	Note that the $k$-th derivative of $v(z)$ is given by \cite[p.~405, equation~(16.3.1)]{NIST-Handbook}
	\[v^{(k)}(z)=\frac{(a)_k}{(c)_k}
	{}_1F_1\biggl[\begin{matrix}
		a+k\\
		c+k
	\end{matrix};z\biggr].\]
	Applying Fields' result \cite[Theorem 3]{Fields-1967} yields
	\begin{gather}\label{2F2 transformation}
		{}_2F_2\biggl[\begin{matrix}
			a,b+\lambda\\
			c,d+\lambda
		\end{matrix};z\biggr]=\sum_{k=0}^{\infty}\frac{(d-b)_k}{(d+\lambda)_k}\frac{(-z)^k}{k!}v^{(k)}(z)=\sum_{k=0}^{\infty}\frac{(a)_k(d-b)_k}{(c)_k(d+\lambda)_k}\frac{(-z)^k}{k!}{}_1F_1\biggl[\begin{matrix}
			a+k\\
			c+k
		\end{matrix};z\biggr]
	\end{gather}
	with $\lambda+d\notin\ZZ_{\leqslant 0}$, and also gives the asymptotic expansion \eqref{2F2 large parameter--lambda}.
\end{proof}

\begin{Remark}
	~
	\begin{itemize}\itemsep=0pt
		\item[(1)] The series (\ref{2F2 transformation}) can be reformulated as follows:
		\begin{equation}\label{2F2 transformation reformulated}
			{}_2F_2\biggl[\begin{matrix}
				a,b\\
				c,d
			\end{matrix};z\biggr]=\sum_{k=0}^{\infty}\frac{(a)_k(d-b)_k}{(c)_k(d)_k}\frac{(-z)^k}{k!}{}_1F_1\biggl[\begin{matrix}
				a+k\\
				c+k
			\end{matrix};z\biggr],
		\end{equation}
		which is a specialization of Luke's result \cite[Section~9.1, equation~(34)]{Luke-1969}
		\begin{align}
					&{}_{p+1}F_{q+1}\biggl[\begin{matrix}
						a_1,\dots,a_p,b\\
						c_1,\dots,c_q,d
					\end{matrix};z\biggr]{}\nonumber\\
 & \qquad{}= \sum_{k=0}^{\infty}\frac{ (a_1 )_k\cdots (a_p )_k}{ (c_1 )_k\cdots (c_q )_k}\frac{(d-b)_k}{(d)_k}
 \frac{(-z)^k}{k!}{}_{p}F_{q}\biggl[\begin{matrix}
						a_1+k,\dots,a_p+k\\
						c_1+k,\dots,c_q+k
					\end{matrix};z\biggr],\label{pFp transformation}
\end{align}
			where $p\leqslant q$ with $z\in\CC$, or $p=q+1$ with $\operatorname{Re}(z)<\frac{1}{2}$.
		
		\item[(2)] Jur\v{s}\.enas \cite[Section~4.1, p.~69]{Jursenas-2014} mentioned an asymptotic formula
		\begin{equation}\label{Jursenas expansion-1}
			{}_2F_2\biggl[\begin{matrix}
				\alpha+1,\beta+\nu\\
				\delta,\beta+1+\nu
			\end{matrix};-\frac{1}{z}\biggr]\sim {}_1F_1\biggl[\begin{matrix}
				\alpha+1\\
				\delta
			\end{matrix};-\frac{1}{z}\biggr],\qquad \nu\in\ZZ_{\geqslant 0},\quad\nu\to \infty.
		\end{equation}
		without proof, whereas our Theorem \ref{Thm: 2F2 large parameter--lambda} gives a full asymptotic expansion of (\ref{Jursenas expansion-1}).
	\end{itemize}
\end{Remark}

The asymptotics of $_2F_2$ for $\lambda=-n$ is given below.
\begin{Theorem}\label{Thm: 2F2 large parameter--n}
	Let $c\notin\ZZ_{\leqslant 0}$, $d\notin\ZZ$, $b-d\notin\ZZ_{\geqslant 0}$ and $z\ne 0$. Then for any positive integer $N$,
	\begin{equation}\label{2F2 large parameter--n}
		{}_2F_2\biggl[\begin{matrix}
			a,b-n\\
			c,d-n
		\end{matrix};z\biggr]=\sum_{k=0}^{N-1}\frac{(a)_k(d-b)_k}{(c)_k(d-n)_k}\frac{(-z)^k}{k!}{}_1F_1\biggl[\begin{matrix}
			a+k\\
			c+k
		\end{matrix};z\biggr]+\mo\big(n^{-N}\big),
	\end{equation}
	where $n\to +\infty$ through integer values.
\end{Theorem}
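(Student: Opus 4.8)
The plan is to reduce Theorem~\ref{Thm: 2F2 large parameter--n} to Theorem~\ref{Thm: 2F2 large parameter--lambda} by setting $\lambda=-n$, so the only real work is justifying that the transformation formula \eqref{2F2 transformation} and the $\mo(\lambda^{-N})$ estimate survive the specialization to negative integer values of $\lambda$. First I would record that, under the stated hypotheses ($d\notin\ZZ$, $b-d\notin\ZZ_{\geqslant 0}$, and $d-n\notin\ZZ_{\leqslant 0}$ for $n$ large), the right-hand side of \eqref{2F2 transformation reformulated} converges and equals $_2F_2[a,b;c,d;z]$; then replacing $(b,d)$ by $(b-n,d-n)$ gives precisely the series identity
\[
{}_2F_2\biggl[\begin{matrix} a,b-n\\ c,d-n\end{matrix};z\biggr]
=\sum_{k=0}^{\infty}\frac{(a)_k(d-b)_k}{(c)_k(d-n)_k}\frac{(-z)^k}{k!}\,{}_1F_1\biggl[\begin{matrix} a+k\\ c+k\end{matrix};z\biggr],
\]
since $(d-n)-(b-n)=d-b$ is unchanged. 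This is the $\lambda=-n$ instance of \eqref{2F2 transformation}, valid whenever $d-n\notin\ZZ_{\leqslant 0}$, i.e.\ for all sufficiently large $n$.

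The second step is to truncate this series at $k=N-1$ and bound the tail. I would split the remainder $R_N(n,z):=\sum_{k\geqslant N}\frac{(a)_k(d-b)_k}{(c)_k(d-n)_k}\frac{(-z)^k}{k!}\,G_k(z)$, where $G_k(z)={}_1F_1[a+k;c+k;z]$ in the notation of Lemma~\ref{lem-1F1}. The factor that produces the gain in $n$ is $1/(d-n)_k$: for $k\geqslant 1$ one has $(d-n)_k=(d-n)(d-n+1)\cdots(d-n+k-1)$, whose modulus is $\asymp n^k$ as $n\to\infty$ with $k$ fixed, and more precisely $|(d-n)_k|\geqslant c_N n^k$ uniformly for $N\leqslant k$ once $n$ exceeds some threshold (each factor has modulus $\geqslant n-|d|-k$, and one can dominate the whole tail by comparing with a convergent series after extracting $n^{-N}$). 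Meanwhile $|(a)_k(d-b)_k/(c)_k|\leqslant C\,k^{\operatorname{Re}(a+d-b-c)}\,\Gamma(k+1)^{?}$—more cleanly, $(a)_k/(c)_k=\mo(k^{\operatorname{Re}(a-c)})$ by Lemma~\ref{Lemma: ratio of Pochhammer symbol} and $(d-b)_k=\mo(\Gamma(k+\operatorname{Re}(d-b))/\Gamma(\operatorname{Re}(d-b)))$ grows like $k!$ up to polynomial factors, and $G_k(z)=\mo(\me^{\gamma|z|})$ uniformly in $k$ by Lemma~\ref{lem-1F1}. Combining, the general term of $R_N$ is $\mo\bigl(n^{-k}\,k^{\alpha}\,|z|^k/k!\cdot k!\bigr)$ after the $(d-b)_k$ and $1/k!$ largely cancel, leaving something like $\mo\bigl(n^{-k}\,\mathrm{poly}(k)\,|z|^k\bigr)$; summing a geometric-type series in $|z|/n$ (legitimate for $n$ large with $z$ fixed) gives $R_N(n,z)=\mo(n^{-N})$, with the constant depending on $z$, $N$ and the parameters but not on $n$. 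This is exactly the asserted error term.

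The main obstacle I anticipate is making the tail estimate genuinely uniform and correctly tracking the interplay between $(d-b)_k$ (which grows like $k!$) and the $1/k!$ in \eqref{2F2 transformation}: one must be careful that the series $\sum_k (d-b)_k (-z)^k/(k!\,(d-n)_k)\,G_k(z)$ is not merely formally $\mo(n^{-N})$ but actually converges and is bounded with an $n$-independent constant. The clean way to handle this is to factor $1/(d-n)_k = n^{-k}\prod_{j=0}^{k-1}(1-(d+j)/n)^{-1}$ and observe that for $k\leqslant n/2$ (say) the product is bounded by $2^k$, while the contribution of $k>n/2$ is super-exponentially small because of the surviving $1/k!$ once $(d-b)_k$ is controlled by $\Gamma(k+\beta)$ with $\beta=\operatorname{Re}(d-b)$; thus the whole tail from $k\geqslant N$ is $\leqslant C n^{-N}\sum_{k\geqslant 0}(2|z|)^k k^{\beta+|\operatorname{Re}(a-c)|}/k!\cdot(\text{bounded})<\infty$. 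Alternatively, one could avoid re-deriving anything by simply invoking that Fields' theorem \cite{Fields-1967}, which underlies \eqref{2F2 transformation}, already supplies the $\mo(\lambda^{-N})$ bound uniformly in the sector $|\arg(\lambda+d)|\leqslant\pi-\delta$, and noting that the ray $\lambda=-n\to-\infty$ lies in that sector (it is the direction $\arg(\lambda+d)\to\pi$, which is included once $\delta$ is chosen small); then Theorem~\ref{Thm: 2F2 large parameter--n} is the restriction of Theorem~\ref{Thm: 2F2 large parameter--lambda} to $\lambda\in-\ZZ_{>0}$, and the extra hypothesis $b-d\notin\ZZ_{\geqslant 0}$ is precisely what keeps $(d-b)_k\neq 0$ and the coefficients well defined. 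I would present the argument via this second route, with the direct tail estimate sketched as a remark for readers who want the constant made explicit.
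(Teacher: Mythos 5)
Your preferred route (deducing the theorem from Theorem~\ref{Thm: 2F2 large parameter--lambda} by setting $\lambda=-n$) does not work, and this is precisely why the paper proves the $-n$ case separately. Fields' uniform error bound, and hence Theorem~\ref{Thm: 2F2 large parameter--lambda}, holds for $\lambda\to\infty$ inside a \emph{fixed} sector $|{\arg}(\lambda+d)|\leqslant\pi-\delta$ with $\delta>0$ chosen in advance. Along $\lambda=-n$ one has $\arg(d-n)\to\pi$ (and $\arg(d-n)=\pi$ exactly when $d$ is real), so for every fixed $\delta$ the points $\lambda=-n$ eventually leave the sector; choosing ``$\delta$ small'' cannot repair this, since $\delta$ may not depend on $n$. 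The exclusion is not a technicality: for $\ell$ comparable with $n$ the denominator $(d-n)_\ell$ contains factors $d-n+j$ whose moduli are as small as $\operatorname{dist}(d,\ZZ)$, so $1/(d-n)_\ell$ is nothing like $n^{-\ell}$ in that range, and the sector hypothesis is exactly what keeps Fields' estimate away from this regime.

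Your first route (truncate \eqref{2F2 transformation reformulated} with parameters shifted by $-n$, bound $G_k(z)$ by Lemma~\ref{lem-1F1} and the Pochhammer ratios by Lemma~\ref{Lemma: ratio of Pochhammer symbol}, then split the tail according to the size of $k$ relative to $n$) is in spirit the paper's proof, and your geometric-series summation over $N\leqslant k\leqslant n/2$, after writing $1/(d-n)_k=n^{-k}\prod_{j=0}^{k-1}(1-(d+j)/n)^{-1}$ and bounding the product by $C\cdot 2^k$ there, is fine — indeed slightly cleaner than the paper's extra splitting at $\lceil\log n\rceil$. But your treatment of $k\gtrsim n$ contains two incorrect steps. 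First, the claimed lower bound $|(d-n)_k|\geqslant c_N n^k$ ``uniformly for $k\geqslant N$'' is false once $k$ is of order $n$ or larger: for instance $|(d-n)_{2n}|=|(1-d)_n(d)_n|\asymp (n!)^2$ up to powers of $n$, which is exponentially smaller than $n^{2n}$. Second, the assertion that the contribution of $k>n/2$ is superexponentially small ``because of the surviving $1/k!$'' is also false, since $(d-b)_k/k!=\mo\bigl(k^{\operatorname{Re}(d-b)-1}\bigr)$ consumes the $1/k!$ entirely; the only source of decay in that range is the factorial growth of $|(d-n)_k|$ itself. The paper closes exactly these ranges with two further cases: for $n/2<\ell\leqslant n$ it writes $a_\ell(n)=1/(1-d+n-\ell)_\ell$ and chooses the least $r$ with $|(1-d+n-\ell)+r|\geqslant 4|z|$ (this is where $d\notin\ZZ$ enters, controlling the finitely many small factors) to get $|a_\ell(n)|\leqslant C(4|z|)^{r-\ell}$ and hence an $\mo\bigl(n^{\operatorname{Re}(\alpha)+1}2^{-n}\bigr)$ contribution; for $\ell>n$ it uses $a_\ell(n)=(-1)^{\ell-n}/\bigl((1-d)_n(d)_{\ell-n}\bigr)$ to produce a bound of size $|z|^n/n!$ times an $n$-independent convergent series. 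Until you supply estimates of this kind for $n/2<k\leqslant n$ and $k>n$, the tail bound $R_N(n,z)=\mo\bigl(n^{-N}\bigr)$ is not established.
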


\begin{proof}
	We shall follow Nagel's approach. For nonnegative integers $n$ and $\ell$, write
	\[F(z):={}_2F_2\biggl[\begin{matrix}
		a,b-n\\
		c,d-n
	\end{matrix};z\biggr],\qquad a_{\ell}(n):=\frac{(-1)^{\ell}}{(d-n)_{\ell}},\qquad g_{\ell}:=\frac{(a)_{\ell}(d-b)_{\ell}}{(c)_{\ell}\ell!}.\]
	Clearly, $\left|g_\ell\right|\leqslant C\ell^{\operatorname{Re}(\alpha)}$, where $\alpha:=a+d-b-c-1$. Moreover, \eqref{2F2 transformation reformulated} suggests that
	\begin{equation}\label{R(z) definition}
		R(z):=F(z)-G_0(z)=\sum_{\ell=1}^{\infty}a_\ell(n)g_\ell G_\ell(z)z^{\ell},
	\end{equation}
	where $G_\ell(z)$ is given by \eqref{G_l definition}.
	
	Take $n$ large, let $m=\lceil \log n\rceil$ and divide the series \eqref{R(z) definition} into five parts:
	\begin{equation}\label{five sums}
		R(z)=\sum_{1}^{N}+\sum_{N+1}^m+\sum_{m+1}^{n/2}+\sum_{n/2+1}^n+\sum_{n+1}^{\infty}=:S_0+S_1+S_2+S_3+S_4,
	\end{equation}
	where $N\in\ZZ_{>0}$ is chosen so that $\operatorname{Re}(c+N+1)>0$. Therefore, the inequality \eqref{G_l inequality} holds.
	
	Let us derive estimates for the sums in \eqref{five sums}.
	
	\textbf{Case 1.} $1\leqslant \ell\leqslant N$. Now $a_\ell(n)\sim n^{-\ell}$. Thus
	\[S_0=\sum_{\ell=1}^{N-1}a_\ell(n)g_\ell G_\ell(z)z^{\ell}+\mo\big(n^{-N}\big).\]
	
	\textbf{Case 2.} $N+1\leqslant\ell\leqslant m$. Now both $n$ and $n-\ell$ are large. The use of the identity $(z)_\ell=(-1)^{\ell}(1-z-\ell)_{\ell}$ gives
	\begin{equation}\label{a_l(n) identity}
		a_\ell(n)=\frac{1}{(1-d+n-\ell)_\ell}=\frac{\Gamma(1-d+n-\ell)}{\Gamma(1-d+n)}.
	\end{equation}
	By Stirling's formula, we get $a_\ell(n)\sim n^{-\ell}$ and thus
	\[|S_1|\leqslant C\mathrm{e}^{\gamma|z|}\sum_{\ell=N+1}^{m}\ell^{\operatorname{Re}(\alpha)}\biggl(\frac{|z|}{n}\biggr)^{\ell}.\]
	For $n$ large, $\ell^{\operatorname{Re}(\alpha)}\big(\frac{|z|}{n}\big)^{\ell}$ decreases with respect to $\ell\geqslant N+1$ and then
	\[|S_1|\leqslant C\mathrm{e}^{\gamma|z|}m(N+1)^{\operatorname{Re}(\alpha)}\biggl(\frac{|z|}{n}\biggr)^{N+1}=\mo\biggl(\frac{\log n}{n^{N+1}}\biggr).\]
	
	\textbf{Case 3.} $m+1\leqslant \ell\leqslant \frac{n}{2}$. Stirling's formula shows that
	\[a_\ell(n)=\frac{\Gamma(1-d+n-\ell)}{\Gamma(1-d+n)}\sim \biggl(1-\frac{\ell}{n}\biggr)^{-d}\frac{(n-\ell)!}{n!}.\]
	It follows from $\frac{1}{2}\leqslant 1-\frac{\ell}{n}<1$ that
	\[|a_\ell(n)|\leqslant C \frac{(n-\ell)!}{n!}=\frac{C}{(n-\ell+1)\cdots(n-1)n}\leqslant C\bigg(\frac{2}{n}\bigg)^{\ell}.\]
	As in Case 2, the monotonicity gives
	\[|S_2|\leqslant C\mathrm{e}^{\gamma|z|}\sum_{\ell=m+1}^{n/2}\ell^{\operatorname{Re}(\alpha)}\bigg(\frac{2|z|}{n}\bigg)^{\ell}\leqslant C\mathrm{e}^{\gamma|z|}nm^{\operatorname{Re}(\alpha)}\bigg(\frac{2|z|}{n}\bigg)^{m+1}=\mo\big(n^{-\frac{1}{2}\log n}\big).\]
	
	\textbf{Case 4.} $\frac{n}{2}+1\leqslant \ell\leqslant n$. Choose the least number $r\in\ZZ_{\geqslant 0}$ so that
	\[|(1-d+n-\ell)+r|\geqslant 4|z|>0.\]
	Note that $\frac{1}{|z+n|}$ is bounded uniformly for $n\geqslant 1$. Using \eqref{a_l(n) identity} gives
	\[|a_{\ell}(n)|=\prod_{j=0}^{\ell-1}\big|(1-d+n-\ell)+j\big|^{-1}\leqslant C(4|z|)^{r-\ell}.\]
	Thus
	\[|S_3|\leqslant C\mathrm{e}^{\gamma|z|}(4|z|)^r\sum_{\ell=n/2+1}^{n}\ell^{\operatorname{Re}(\alpha)}4^{-\ell}=\mo\big(n^{\operatorname{Re}(\alpha)+1}2^{-n}\big).\]
	
	\textbf{Case 5.} $\ell\geqslant n+1$. Now we can obtain $|a_\ell(n)|\leqslant C\frac{n^{\operatorname{Re}(d)}(\ell-n)^{1-\operatorname{Re}(d)}}{n!(\ell-n)!}$ from
	\[a_\ell(n)=\frac{(-1)^{\ell-n}}{(1-d)_n(d)_{\ell-n}}=\frac{(1)_n(1)_{\ell-n}}{(1-d)_n(d)_{\ell-n}}\frac{(-1)^{\ell-n}}{n!(\ell-n)!}.\]
	It follows that
	\begin{align*}
\begin{aligned}
		|S_4|&{} \leqslant C\mathrm{e}^{\gamma|z|}\frac{n^{\operatorname{Re}(d)}}{n!}\sum_{\ell=n+1}^{\infty}\ell^{\operatorname{Re}(\alpha)}(\ell-n)^{1-\operatorname{Re}(d)}\frac{|z|^{\ell}}{(\ell-n)!}\\
		&{} =C\mathrm{e}^{\gamma|z|}n^{\operatorname{Re}(d)}\frac{|z|^n}{n!}\sum_{j=1}^{\infty}(j+n)^{\operatorname{Re}(\alpha)}j^{1-\operatorname{Re}(d)}\frac{|z|^j}{j!}.
\end{aligned}
	\end{align*}
	Note that $\max\{n,j\}\leqslant j+n\leqslant 2\cdot\max\{n,j\}$. Then for $p,q\in\RR$, we have
	\begin{align*}
		\sum_{j=1}^{\infty}(j+n)^p j^q\frac{|z|^j}{j!}&{}\leqslant Cn^p\sum_{j=1}^{n}j^q\frac{|z|^j}{j!}+C\sum_{j=n+1}^{\infty}j^{p+q}\frac{|z|^j}{j!}\\
		&{} \leqslant Cn^{\max\{0,p\}}\sum_{j=1}^{\infty}j^{\max\{q,p+q\}}\frac{|z|^j}{j!},
	\end{align*}
	which shows that
	\[|S_4|\leqslant C\mathrm{e}^{\gamma|z|}n^{\max\{\operatorname{Re}(d),\operatorname{Re}(\alpha+d)\}}\frac{|z|^n}{n!}\sum_{j=1}^{\infty}j^{1-\operatorname{Re}(d)+\max\{0,\operatorname{Re}(\alpha)\}}\frac{|z|^j}{j!}.\]
	
	Now the asymptotic expansion \eqref{2F2 large parameter--n} follows from the estimates above.
\end{proof}

\begin{Remark}
	When $b-d\in\ZZ_{>0}$, the series (\ref{2F2 transformation reformulated}) terminates. Therefore, when $c\notin\ZZ_{\leqslant 0}$, $d\notin\ZZ$ and $b-d\in\ZZ_{>0}$, take $N=b-d+1$ in Theorem \ref{Thm: 2F2 large parameter--n}, and as a result, the asymptotic expansion of $_2F_2[a,b-n;c,d-n;z]$ is given by (\ref{2F2 large parameter--n}) with the error term vanishing.
\end{Remark}

\subsection[Asymptotics of \_pF\_q (p leqslant q)]{Asymptotics of $\boldsymbol{_pF_q}$ $\boldsymbol{(p\leqslant q)}$}

We have established the asymptotics of $_2F_2$ for large parameters. More generally, by using Nagel's approach, we can further obtain the following result about the generalized hypergeometric functions $_{p}F_{q}$ defined in \eqref{pFq definition}.

\begin{Theorem}\label{Thm: pFq large parameters}
\quad
	\begin{itemize}\itemsep=0pt	
		\item[$(1)$] Let $p$, $q$, $r$ and $s$ be nonnegative integers satisfying $q\geqslant p+1$ and $s\geqslant r-1$. Define
		\[\mathcal{F}_n^{(1)}(z):={}_{p+r}F_{q+s}\biggl[\begin{matrix}
			a_1-n,\dots,a_p-n,b_1,\dots,b_r\\
			c_1-n,\dots,c_q-n,d_1,\dots,d_s
		\end{matrix};z\biggr]\]
		and assume that $c_1,\dots,c_q\notin\ZZ$ and $d_1,\dots,d_s\notin\ZZ_{\leqslant 0}$. Then for any positive integer $N$,
		\[\mathcal{F}_n^{(1)}(z)=\sum_{k=0}^{N-1}\frac{(a_1-n)_k\cdots(a_p-n)_k}{(c_1-n)_k\cdots(c_q-n)_k}\frac{(b_1)_k\cdots(b_r)_k}{(d_1)_k
 \cdots(d_s)_k}\frac{z^k}{k!}+\mo\big(n^{(p-q)N}\big)\]
		as $n\to +\infty$ through integer values.
		
		\item[$(2)$] Let $p\in\ZZ_{\geqslant 0}$. Define
			\[\mathcal{F}_n^{(2)}(z):={}_{p+1}F_{p+1}\biggl[\begin{matrix}
				a_1,\dots,a_p,b-n\\
				c_1,\dots,c_p,d-n
			\end{matrix};z\biggr]\]
			and assume that $c_1,\dots,c_q\notin\ZZ_{\geqslant 0}$ and $b,d\notin\ZZ$. Then for any positive integer $N$,
			\[\mathcal{F}_n^{(2)}(z)=\sum_{k=0}^{N-1}\frac{(a_1)_k\cdots(a_p)_k}{(c_1)_k\cdots(c_p)_k}
 \frac{(d-b)_k}{(d-n)_k}\frac{(-z)^k}{k!}{}_{p}F_{p}\biggl[\begin{matrix}
				a_1+k,\dots,a_p+k\\
				c_1+k,\dots,c_p+k
			\end{matrix};z\biggr]+\mo\big(n^{-N}\big)\]
			as $n\to +\infty$ through integer values.
	\end{itemize}
\end{Theorem}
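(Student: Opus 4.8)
The plan is to prove both parts by adapting Nagel's approach exactly as carried out in the proof of Theorem~\ref{Thm: 2F2 large parameter--n}, replacing the single confluent factor $G_\ell(z)$ there by a $_pF_p$ factor with shifted parameters, and using the generalized version \eqref{pFp estimate} of Lemma~\ref{lem-1F1} in place of \eqref{G_l inequality}. For part~(2), which is the closer analogue, I would first record the transformation \eqref{pFp transformation} (Luke's formula) with $\lambda=-n$, i.e.\ write
\[
	\mathcal{F}_n^{(2)}(z)=\sum_{\ell=0}^{\infty}\frac{(a_1)_\ell\cdots(a_p)_\ell}{(c_1)_\ell\cdots(c_p)_\ell}\frac{(d-b)_\ell}{(d-n)_\ell}\frac{(-z)^\ell}{\ell!}\,{}_pF_p\biggl[\begin{matrix}a_1+\ell,\dots,a_p+\ell\\c_1+\ell,\dots,c_p+\ell\end{matrix};z\biggr],
\]
which is valid since $p\leqslant p$ in \eqref{pFp transformation}. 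Then I would set $a_\ell(n):=(-1)^\ell/(d-n)_\ell$ and $g_\ell:=\frac{(a_1)_\ell\cdots(a_p)_\ell (d-b)_\ell}{(c_1)_\ell\cdots(c_p)_\ell \ell!}$, note the polynomial bound $|g_\ell|\leqslant C\ell^{\operatorname{Re}(\alpha)}$ for a suitable $\alpha$ (here $\alpha=\sum a_i-\sum c_i+d-b-1$), and split the tail $R(z)=\mathcal{F}_n^{(2)}(z)-\sum_{\ell=0}^{N-1}(\cdots)$ into the same five ranges $\ell\in[1,N],[N+1,m],[m+1,n/2],[n/2+1,n],[n+1,\infty)$ with $m=\lceil\log n\rceil$. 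The estimates of $a_\ell(n)$ in each range are literally those from the proof of Theorem~\ref{Thm: 2F2 large parameter--n} (they depend only on $d$ and $n$, not on the hypergeometric orders), and the $_pF_p$ factor is bounded by $C\me^{\gamma|z|}$ via \eqref{pFp estimate} for $\ell$ beyond a threshold $N$, so each of $S_1,\dots,S_4$ is $\mo(n^{-N})$ by the same monotonicity and Stirling arguments; $S_0$ contributes the stated finite sum up to $\mo(n^{-N})$ because $a_\ell(n)\sim n^{-\ell}$.

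For part~(1) the structure is the same but the bookkeeping is slightly different because there is no ``derivative'' reformulation; instead one works directly from the series. Writing out $\mathcal{F}_n^{(1)}(z)=\sum_{\ell\geqslant 0} t_\ell(n) z^\ell/\ell!$ with
\[
	t_\ell(n)=\frac{(a_1-n)_\ell\cdots(a_p-n)_\ell}{(c_1-n)_\ell\cdots(c_q-n)_\ell}\,(b_1)_\ell\cdots(b_r)_\ell\,\big/\,\big((d_1)_\ell\cdots(d_s)_\ell\big),
\]
the point is that each ratio $(a_i-n)_\ell/(c_j-n)_\ell$ is, after pairing $p$ of the $q$ denominators with the $p$ numerators, bounded like $n^{(p-q)\ell}$ times lower-order corrections — more precisely $(a_i-n)_\ell/(c_i-n)_\ell\to 1$ by the same Stirling estimate used in Cases~2--3, while each of the $q-p$ leftover denominators $(c_j-n)_\ell$ contributes a factor $\sim n^{-\ell}$ (up to constants and the range-dependent refinements of Cases~3--5). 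So $t_\ell(n)$ carries an overall factor of order $n^{(p-q)\ell}$, and since $q\geqslant p+1$ this is at worst $n^{-\ell}$; the remaining $\ell$-dependence $\frac{(b_1)_\ell\cdots(b_r)_\ell}{(d_1)_\ell\cdots(d_s)_\ell}\frac{1}{\ell!}$ is, because $s\geqslant r-1$, bounded by $C\ell^{\sigma}/\ell!$ for some real $\sigma$ (roughly $\sigma=\sum b_i-\sum d_j$ when $s=r-1$, and summable outright when $s\geqslant r$), so the full series converges and the same five-range split with $m=\lceil\log n\rceil$ shows the tail beyond $N$ terms is $\mo\big(n^{(p-q)N}\big)$. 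Truncating at $N$ terms and using $t_\ell(n)$ itself (not its leading approximation) in the retained terms yields exactly the stated expansion.

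The main obstacle I anticipate is \emph{uniformity} in the middle and upper ranges of~$\ell$ for part~(1): when $\ell$ is comparable to $n$ the naive estimate $(a_i-n)_\ell/(c_i-n)_\ell\approx 1$ degrades, and one must instead use the exact $\Gamma$-quotient form $a_\ell(n)=\Gamma(1-c_j+n-\ell)/\Gamma(1-c_j+n)$ as in \eqref{a_l(n) identity}, then run the Case~3/Case~4/Case~5 arguments (the bounds $(2/n)^\ell$, the ``push past the poles by $r$ steps'' trick for $c_j-n$ near a nonpositive integer, and the $(1)_n/(1-c_j)_n$ splitting for $\ell>n$) simultaneously for all $q$ denominators. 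This is routine but notationally heavy, and one has to check that the worst-case exponent across the $q-p$ excess denominators still beats the polynomial growth coming from $g_\ell$ and from the $(b_i),(d_j)$ factors — which it does precisely because of the hypotheses $q\geqslant p+1$ and $s\geqslant r-1$. A secondary point to be careful about is that in part~(1) the coefficients $(c_j-n)_\ell$ can vanish or blow up unless $c_j\notin\ZZ$ (hence that hypothesis), and similarly part~(2) needs $d\notin\ZZ$ and $b\notin\ZZ$ to keep \eqref{pFp transformation} valid and the $(d-b)_\ell$ factor from terminating prematurely; these are exactly the stated assumptions, so no further conditions are needed. I would therefore present part~(2) in full detail as the template and then indicate part~(1) by pointing to the modifications above.
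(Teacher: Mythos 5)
Your proposal follows essentially the same route as the paper: part (2) via Luke's transformation \eqref{pFp transformation} together with the bound \eqref{pFp estimate}, and part (1) by pairing each $(a_i-n)_\ell$ with a $(c_i-n)_\ell$ and treating the $q-p$ excess denominators exactly like $a_\ell(n)$ in the proof of Theorem \ref{Thm: 2F2 large parameter--n}, with the same five-range split $1\leqslant\ell\leqslant N$, $N<\ell\leqslant m$, $m<\ell\leqslant n/2$, $n/2<\ell\leqslant n$, $\ell>n$ (the paper's own proof is precisely this sketch). The only quibble is your claimed bound $C\ell^{\sigma}/\ell!$ for $\frac{(b_1)_\ell\cdots(b_r)_\ell}{(d_1)_\ell\cdots(d_s)_\ell\,\ell!}$ when $s=r-1$, which should just be the polynomial bound $C\ell^{\sigma}$; this is harmless, since polynomial growth of that factor is all the template requires (convergence in the far tail comes from the factorials produced by the excess $(c_j-n)_\ell$).
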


\begin{proof}
	The proof is much akin to that of Theorem \ref{Thm: 2F2 large parameter--n}, so it is sufficient to give some key estimates. To get (1), as in the proof of Theorem \ref{Thm: 2F2 large parameter--n}, establish the estimate
		\[\frac{(a_1-n)_\ell\cdots(a_p-n)_\ell}{(c_1-n)_\ell\cdots(c_p-n)_\ell}=\begin{cases}
			1+\mo(n^{-1}),& 1\leqslant\ell\leqslant N,\\
			\displaystyle 1+\mo\bigg(\frac{\log n}{n}\bigg),& N<\ell\leqslant m,\\
			\mo(1),& m<\ell\leqslant\frac{n}{2},\\
			 \mo(\omega^n),& \frac{n}{2}<\ell\leqslant n,\\
			 \mo\big(n^{\Delta}(\ell-n)^{-\Delta}\big),& \ell>n
		\end{cases}\]
		as $n\to +\infty$ through integer values, where $m=\lceil\log n\rceil$, $\Delta=\sum_{j=1}^{p}\operatorname{Re}(b_j-a_j)$ and $\omega>1$ is a~constant independent of $\ell$ and $n$. To get (2), use \eqref{pFp estimate} and \eqref{pFp transformation}. And the rest is the same.
\end{proof}

\begin{Remark}\quad
\quad
\begin{itemize}\itemsep=0pt
		\item[(1)] Knottnerus \cite{Knottnerus-1960} derived the asymptotic expansions of
		\[{}_pF_q\biggl[\begin{matrix}
			a_1+r,\dots,a_p+r\\
			b_1+r,\dots,b_q+r
		\end{matrix};z\biggr],\qquad r\to +\infty,\quad r\in\ZZ\]
		with $p\leqslant q+1$ and
		\[{}_{p+1}F_p\biggl[\begin{matrix}
			a_1+r,\dots,a_{k-1}+r,a_k,\dots,a_{p+1}\\
			b_1+r,\dots,b_k+r,b_{k+1},\dots,b_p
		\end{matrix};z\biggr],\qquad r\to +\infty,\quad r\in\ZZ\]
		with $1\leqslant k\leqslant p$. These results are also quoted and presented in \cite[Section~7.3]{Luke-1969}, \cite[Ap\-pendix~1]{Nagel-2004} and \cite[Section~16.11\,(iii)]{NIST-Handbook}. Our Theorem \ref{Thm: pFq large parameters} gives the full asymptotic expansion of $_pF_q$ $(p\leqslant q)$ for large $-n$, which does not appear in NIST handbook \cite{NIST-Handbook} and Luke's book \cite{Luke-1969}.
		
		\item[(2)] Nagel's approach cannot be applied to the asymptotics of
		\[{}_{p+1}F_p\biggl[\begin{matrix}
			a_1-n,\dots,a_{k-1}-n,a_k,\dots,a_{p+1}\\
			b_1-n,\dots,b_k-n,b_{k+1},\dots,b_p
		\end{matrix};z\biggr],\qquad n\to +\infty,\quad n\in\ZZ,\]
		where $1\leqslant k\leqslant p$, since the condition $\operatorname{Re}(b_1-n)>\operatorname{Re}(a_1-n)>0$ is needed for the Euler-type integral representation of $_{p+1}F_p$. We are interested in finding a more effective method than Nagel's approach.
	\end{itemize}
\end{Remark}

We end this section with the other results of $_2F_2$ for large parameters.

\begin{Theorem}\label{Thm: 2F2 other results}
	~
	\begin{itemize}\itemsep=0pt
		\item[$(1)$] Assume that $c,d\notin\ZZ$. Then for any positive integer $N$,
		\[{}_2F_2\biggl[\begin{matrix}
			a-n,b\\
			c-n,d-n
		\end{matrix};z\biggr]=\sum_{k=0}^{N-1}\frac{(a-n)_k(b)_k}{(c-n)_k(d-n)_k}\frac{z^k}{k!}+\mo\big(n^{-N}\big)\]
		as $n\to +\infty$ through integer values.
		
		\item[$(2)$] Assume that $a,b,c,d\notin\ZZ$. Then for any positive integer $N$,
			\[{}_2F_2\biggl[\begin{matrix}
				a-n,b-n\\
				c-n,d-n
			\end{matrix};z\biggr]=\me^z\sum_{k=0}^{N-1}\frac{(a-n)_k(d-b)_k}{(c-n)_k(d-n)_k}\frac{(-z)^k}{k!}{}_1F_1\biggl[\begin{matrix}
				c-a\\
				c-n+k
			\end{matrix};z\biggr]+\mo\big(n^{-N}\big)\]
			as $n\to +\infty$ through integer values.
	\end{itemize}
\end{Theorem}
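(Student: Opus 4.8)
The plan is to obtain part~(1) as a special case of a result already proved and to establish part~(2) by the argument of Theorem~\ref{Thm: 2F2 large parameter--n}. For part~(1) I would simply invoke Theorem~\ref{Thm: pFq large parameters}(1) with $p=1$, $q=2$, $r=1$, $s=0$, taking the shifted upper parameter to be $a_1=a$, the two shifted lower parameters to be $c_1=c$, $c_2=d$, and the single remaining upper parameter to be $b_1=b$ (there are no parameters $d_1,\dots,d_s$, so the product $(d_1)_k\cdots(d_s)_k$ is~$1$). Then $\mathcal{F}_n^{(1)}(z)={}_2F_2[a-n,b;c-n,d-n;z]$, the hypothesis $c_1,\dots,c_q\notin\ZZ$ becomes precisely $c,d\notin\ZZ$, and the error term $\mo\big(n^{(p-q)N}\big)$ is $\mo\big(n^{-N}\big)$; this is the claimed expansion.

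For part~(2) I would first turn the left-hand side into a series of confluent hypergeometric functions. Applying Luke's transformation \eqref{2F2 transformation reformulated} with $(a,b,c,d)$ replaced by $(a-n,b-n,c-n,d-n)$, and using $(d-n)-(b-n)=d-b$, gives
\[{}_2F_2\biggl[\begin{matrix}a-n,b-n\\c-n,d-n\end{matrix};z\biggr]=\sum_{k=0}^{\infty}\frac{(a-n)_k(d-b)_k}{(c-n)_k(d-n)_k}\frac{(-z)^k}{k!}\,{}_1F_1\biggl[\begin{matrix}a-n+k\\c-n+k\end{matrix};z\biggr],\]
which is legitimate for every $z$, being the case $p\leqslant q$ of \eqref{pFp transformation}; the hypotheses $a,b,c,d\notin\ZZ$ keep every Pochhammer symbol nonzero and $c-n+k\notin\ZZ_{\leqslant 0}$. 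Applying Kummer's transformation ${}_1F_1[\alpha;\gamma;z]=\me^z\,{}_1F_1[\gamma-\alpha;\gamma;-z]$ to each summand (here $(c-n+k)-(a-n+k)=c-a$) then exhibits this series as the right-hand side of the assertion plus the tail $\sum_{k\geqslant N}$.

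It therefore remains to show that the tail is $\mo\big(n^{-N}\big)$, and here I would follow the proof of Theorem~\ref{Thm: 2F2 large parameter--n} essentially line for line. With $m=\lceil\log n\rceil$, decompose the remainder $R(z)$ (the displayed series minus its $k=0$ term) as $\sum_{1}^{N}+\sum_{N+1}^{m}+\sum_{m+1}^{n/2}+\sum_{n/2+1}^{n}+\sum_{n+1}^{\infty}$, where the first sum supplies the remaining main terms $k=1,\dots,N-1$ up to an error $\mo\big(n^{-N}\big)$ (the coefficient being $\sim n^{-k}$ for bounded $k$). In each of the last four sums the coefficient $(a-n)_k(d-b)_k/[(c-n)_k(d-n)_k]$ is estimated exactly as $a_\ell(n)$ and $g_\ell$ were in Cases~2--5 there, via Stirling's formula, Lemma~\ref{Lemma: ratio of Pochhammer symbol}, the identity $(x-n)_k=\Gamma(1-x+n-k)^{-1}\Gamma(1-x+n)$ for $k\leqslant n$, and $(x-n)_k=(-1)^n(1-x)_n(x)_{k-n}$ for $x\in\{a,c,d\}$ when $k>n$; the extra factor $(a-n)_k/(c-n)_k$ contributes only a polynomial-in-$n$ correction ($1+\mo(1/n)$ for bounded $k$ and $\mo\big(n^{\operatorname{Re}(c-a)}(k-n)^{\operatorname{Re}(a-c)}\big)$ for $k>n$) that does not affect the order of any sub-sum. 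The accompanying confluent factor ${}_1F_1[a-n+k;c-n+k;z]=\me^z\,{}_1F_1[c-a;c-n+k;-z]$ must be bounded by $C\me^{\gamma|z|}$ uniformly in $k$ and $n$: when $n-k$ is large this follows by dominating $\sum_i(c-a)_i(-z)^i/[(c-n+k)_i\,i!]$ term by term, since $|1/(c-n+k)_i|$ then decays geometrically in $i$, while when $c-n+k$ lies in a bounded region one argues as in Lemma~\ref{lem-1F1}. Summing the five contributions yields $\mo\big(n^{-N}\big)$.

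The hard part is precisely this last uniform bound. In Theorem~\ref{Thm: 2F2 large parameter--n} the confluent functions $G_\ell(z)$ had fixed parameters, so Lemma~\ref{lem-1F1}---whose hypothesis $\operatorname{Re}(c+N+1)>0$ pins the lower parameter into the right half-plane---applied directly; here both parameters of the confluent factor run off to $-\infty$ with $n$, so neither Lemma~\ref{lem-1F1} nor the inequality \eqref{1F1 inequality-1} (each of which presupposes a lower parameter with argument in $(-\pi,\pi)$ and eventually positive real part) is available as stated, and one must instead control the confluent factor by the term-by-term comparison sketched above, being careful that whatever polynomial-in-$|z|$ growth ${}_1F_1[c-a;c-n+k;-z]$ may develop when $n-k$ is large is absorbed by the smallness of the corresponding coefficient in that range. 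Once that uniform bound is secured, the five-region accounting is entirely routine.
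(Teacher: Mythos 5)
Your proposal is correct and follows essentially the same route as the paper: part (1) is exactly the paper's specialization of Theorem~\ref{Thm: pFq large parameters}(1), and part (2) combines \eqref{2F2 transformation reformulated} with Kummer's transformation and then reruns the five-region estimate of Theorem~\ref{Thm: 2F2 large parameter--n}, your careful uniform bound on the confluent factor being precisely the step the paper dismisses as ``easy to verify'' before omitting the rest. The only cosmetic difference is that your Kummer step (correctly) yields ${}_1F_1[c-a;c-n+k;-z]$, whereas the theorem as printed writes the argument $z$; this sign does not affect the structure or validity of the argument.
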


\begin{proof}
	Assertion (1) follows immediately from Theorem \ref{Thm: pFq large parameters}\,(1). In order to prove Assertion~(2), we only need to note that combining \eqref{2F2 transformation reformulated} and the Kummer transformation \cite[equation~(13.2.39)]{NIST-Handbook} yields
		\[
		{}_2F_2\biggl[\begin{matrix}
			a-n,b-n\\
			c-n,d-n
		\end{matrix};z\biggr]
		=\mathrm{e}^z\sum_{k=0}^{\infty}\frac{(a-n)_k(d-b)_k}{(c-n)_k(d-n)_k}\frac{(-z)^k}{k!}{}_1F_1\biggl[\begin{matrix}
			c-a\\
			c-n+k
		\end{matrix};z\biggr].
		\]
		In addition, it is easy to verify that
		\[
		\biggl|{}_1F_1\biggl[\begin{matrix}
			c-a\\
			c-n+k
		\end{matrix};z\biggr]\biggr|\leqslant K,
		\]
		where $K$ is independent of $n$ and $k$. The rest of the proof is similar to that of Theorem \ref{Thm: pFq large parameters} and is omitted here.
\end{proof}

\appendix
\section{Numerical verification of Theorem \ref{Thm: Psi_1 two large arguments}}\label{Appendix A}

By using \textsc{Mathematica} 12.1, we provide a numerical verification of Theorem \ref{Thm: Psi_1 two large arguments}. The value of $\Psi_1$ is evaluated by using the following integral representation
	\[
	\Psi_1[a,b;c,c';x,y]
	=\frac{\Gamma(c)}{\Gamma(b)\Gamma(c-b)}\int_{0}^{1}t^{b-1}(1-t)^{c-b-1}(1-xt)^{-a}{}_{1}F_{1}\biggl[\begin{matrix}
		a\\
		c'
	\end{matrix};\frac{y}{1-xt}\biggr]\mathrm{d}t,
	\]
	where $a\in\mathbb{C}$, $\operatorname{Re}(c)>\operatorname{Re}(b)>0$, $c'\in\mathbb{C}\setminus\ZZ_{\leqslant 0}$, $y\in\mathbb{C}$ and $x\in\mathbb{C}\setminus[1,+\infty)$. The value of the right-hand side of \eqref{Thm: Psi_1 two large arguments 1} is denoted by $\mathrm{AE_{\Psi_1}}$. Tables \ref{Table 1} and \ref{Table 2} below clearly illustrate that the ratio $\frac{\Psi_1}{\mathrm{AE}_{\Psi_1}}$ approaches to $1$ as $x\rightarrow-\infty$ $(y=\gamma(1-x)\rightarrow+\infty)$.
\begin{table}[h]
	\renewcommand{\arraystretch}{1.2}
	\centering
	\caption{Numerical comparison when $c'=a=3$, $b=\frac{3}{2}$, $c=\frac{5}{2}$ and $\gamma=1$.}\label{Table 1}
\vspace{1mm}

	\begin{tabular}{|c|cc|}\hline\hline
		~ & $x$ & $\frac{\Psi_1}{\mathrm{AE}_{\Psi_1}}$ \\ \hline\hline
		1 & $-10$ & $1.06951$ \\ \hline
		2 & $-100$ & $1.00745$ \\ \hline
		3 & $-1000$ & $1.00075$ \\ \hline
		4 & $-2000$ & $1.00037$ \\ \hline
		5 & $-3000$ & $1.00025$
		\\ \hline\hline
	\end{tabular}
	\caption{Numerical comparison when $a=3$, $b=\frac{3}{2}$, $c=\frac{5}{2}$, $c'=2$ and $\gamma=\frac{1}{5}$.}\label{Table 2}
\vspace{1mm}

	\begin{tabular}{|c|cc|}\hline\hline
		~ & $x$ & $\frac{\Psi_1}{\mathrm{AE}_{\Psi_1}}$ \\ \hline\hline
		1 & $-10$ & $0.98215$ \\ \hline
		2 & $-100$ & $1.00223$ \\ \hline
		3 & $-1000$ & $1.00025$ \\ \hline
		4 & $-2000$ & $1.00012$ \\ \hline
		5 & $-3000$ & $1.00008$
		\\ \hline\hline
	\end{tabular}
\end{table}

\subsection*{Acknowledgements} The authors thank the referees for their valuable comments and suggestions.

\pdfbookmark[1]{References}{ref}
\LastPageEnding
		
\end{document}